\title[The Binomial Theorem and motivic classes of tori]{The Binomial Theorem
and\\
motivic classes of universal quasi-split tori}
\author{Daniel Bergh}
\date{}
\theoremstyle{plain}
\newtheorem{theorem}{Theorem}
\newtheorem*{thmBinomial}{The Binomial Theorem}
\newtheorem{prop}[theorem]{Proposition}
\newtheorem{lemma}[theorem]{Lemma}
\theoremstyle{definition}
\newtheorem*{definition}{Definition}
\theoremstyle{remark}
\newtheorem*{remark}{Remark}
\DeclareMathOperator{\Spec}{Spec}
\DeclareMathOperator{\K0}{K_0}
\DeclareMathOperator{\Sd}{Sd}
\DeclareMathOperator{\Conf}{Conf}
\let\Im\relax
\DeclareMathOperator{\Im}{Im}
\newcommand{\pK}[0]{\ensuremath{\mathscr{K}_0}}
\newcommand{\pU}[0]{\ensuremath{\mathscr{U}_0}}
\newcommand{\Bu}[0]{\ensuremath{\mathrm{A}}}
\newcommand{\pBu}[0]{\ensuremath{\mathscr{A}}}
\newcommand{\li}[0]{\ensuremath{\Lambda}}
\newcommand{\rli}[0]{\ensuremath{\tilde{\Lambda}}}
\newcommand{\cat}[1]{\ensuremath{\mathrm{#1}}}
\newcommand{\catset}[1]{\cat{Set}}
\newcommand{\term}[1]{{\em #1}}
\newcommand{\ZZ}[0]{\ensuremath{\mathbb{Z}}}
\newcommand{\NN}[0]{\ensuremath{\mathbb{N}}}
\newcommand{\LL}[0]{\ensuremath{\mathbb{L}}}
\newcommand{\GG}[0]{\ensuremath{\mathbb{G}}}
\newcommand{\GGm}[0]{\ensuremath{\mathbb{G}_\mathrm{m}}}
\renewcommand{\AA}[0]{\ensuremath{\mathbb{A}}}
\newcommand{\BB}[0]{\ensuremath{\mathrm{B}}}
\newcommand{\Ind}[2]{\ensuremath{\mathrm{Ind}_{#1}^{#2}}}
\newcommand{\Res}[2]{\ensuremath{\mathrm{Res}_{#1}^{#2}}}
\newcommand{\Lef}[0]{\ensuremath{\Lambda}}
\newcommand{\rLef}[0]{\ensuremath{\widetilde{\Lambda}}}
\newcommand{\lcat}[1]{\ensuremath{\mathrm{#1}}}
\newcommand{\cVar}[0]{\ensuremath{\lcat{Var}}}
\newcommand{\cgVar}[1]{\ensuremath{{#1}\mbox{-}\lcat{Var}}}
\newcommand{\todo}[1]{
    %\addcontentsline{tdo}{todo}{\protect{#1}}
    %\marginpar{#1}
}
\begin{document}
\begin{abstract}
Taking symmetric powers of varieties can be seen as a functor
from the category of varieties to the category of varieties
with an action by the symmetric group. We study a corresponding
map between the Grothendieck groups of these categories. In
particular, we derive a binomial formula and use it to give
explicit expressions for the classes of universal quasi-split
tori in the equivariant Grothendieck group of varieties.
\end{abstract}
\maketitle
\section*{Introduction}
Given a variety $X$ over some field $k$ and a finite $G$-set $S$ for a finite group
$G$ we may form the power $X^S$. As a variety this is simply the $n$-fold cartesian
product of $X$ with itself, where $n$ is the cardinality of $S$. The action of
$G$ on $S$ induces an action of $G$ on $X^S$ by permuting the factors. This gives a
\term{power functor}
$$
(-)^S\colon \cVar_k \to \cgVar{G}_{k}
$$
from the category $\cVar_k$ of varieties over $k$ to the category $\cgVar{G}_{k}$
of $G$-equi\-variant varieties over $k$.

By using an exponential function defined on the Grothendieck group of varieties,
we show that the power functor extends to a map between the corresponding
Grothendieck rings in a natural way. The construction of the exponential
function was originally developed by Bouc in the context of Burnside rings
in \cite{bouc1992}.

Denote the set with $n$ elements with its natural action of the
symmetric group $\Sigma_n$ by $[n]$.
\begin{thmBinomial}
\label{thm-binomial}
The symmetric power functor $(-)^{[n]}$ extends to a map
between Grothendieck rings
$
(-)^{[n]}\colon\K0(\cVar_k) \to \K0(\cgVar{\Sigma_n}_k)
$
such that the relation
$$
(x + y)^{[n]} = \sum_{i + j = n}
\Ind{\Sigma_i\times\Sigma_j}{\Sigma_n}\left(x^{[i]}\boxtimes y^{[j]}\right)
$$
holds for arbitrary elements $x, y$ in $\K0(\cVar_k)$. Furthermore, we
have the relation $\{E\}^{[n]} = \LL^{dn}\{X\}^{[n]}$ for any rank $d$
vector bundle $E \to X$.
\end{thmBinomial}

Our main application of this formula is to obtain an explicit expression for
the class of the $G$-variety $\GGm^S$ in the Grothendieck group
$\K0(\cgVar{G}_k)$ of $G$-equivariant varieties. The Binomial Theorem allows us
to reduce the problem of deriving such a formula to combinatorial calculations in the Burnside
ring. Our main result is the following:

\begin{theorem}
\label{thm-equiv-torus}
Let $G$ be a finite group and $S$ a finite $G$-set with $n$ elements. Then the
class of $\GGm^S$ is equal to
$$
\sum_{i = 0}^n (-1)^i\lambda^i(S)\LL^{n-i}
$$
in $\K0(\cgVar{G}_k)$.
\end{theorem}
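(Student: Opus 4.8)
The plan is to feed the class $\{\GGm\} = \LL - 1$ into the Binomial Theorem and then to carry out the resulting bookkeeping in the Burnside ring. First I would reduce to the universal situation $G = \Sigma_n$, $S = [n]$. After choosing a bijection $S \cong [n]$, the action of $G$ on $S$ is a homomorphism $G \to \Sigma_n$, and restricting the action along it defines a ring homomorphism $\Res{G}{\Sigma_n}\colon \K0(\cgVar{\Sigma_n}_k) \to \K0(\cgVar{G}_k)$ which fixes $\LL$, carries $\GGm^{[n]}$ to $\GGm^S$, and is compatible with the $\lambda$-operations, so that $\lambda^i([n])$ is sent to $\lambda^i(S)$. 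Since $\Res{G}{\Sigma_n}$ is a ring homomorphism, it is enough to prove the formula for $\Sigma_n$ acting on $[n]$.

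For the universal case I would write $\{\GGm\} = \LL + (-1)$ in $\K0(\cVar_k)$ and apply the Binomial Theorem:
\[
\{\GGm\}^{[n]} = \sum_{i+j=n} \Ind{\Sigma_i \times \Sigma_j}{\Sigma_n}\bigl(\LL^{[i]} \boxtimes (-1)^{[j]}\bigr).
\]
The factors $\LL^{[i]}$ are handled by the vector bundle relation: $\AA^1 \to \Spec k$ is a line bundle, so $\LL^{[i]} = \{\AA^1\}^{[i]} = \LL^{i}\{\Spec k\}^{[i]} = \LL^{i}$. Pulling the central classes $\LL^{i}$ out of the inductions, the coefficient of $\LL^{n-j}$ becomes $\Ind{\Sigma_{n-j} \times \Sigma_j}{\Sigma_n}(\mathbf{1} \boxtimes (-1)^{[j]})$, so that everything reduces to identifying the classes $(-1)^{[j]}$ and the induced classes built from them.

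The crux is then the identity
\[
\Ind{\Sigma_{n-j} \times \Sigma_j}{\Sigma_n}\bigl(\mathbf{1} \boxtimes (-1)^{[j]}\bigr) = (-1)^{j}\,\lambda^{j}([n]).
\]
Because the exponential turns sums into products, the series $\sum_j (-1)^{[j]} t^j$ is the inverse of $\sum_j \{\Spec k\}^{[j]} t^j = \sum_j \mathbf{1} t^j$ under the induction product. Inverting this series in the Burnside ring, I expect to find $(-1)^{[j]} = (-1)^{j}\,\mathrm{sgn}_{\Sigma_j}$, where $\mathrm{sgn}_{\Sigma_j}$ is the sign class written as a virtual combination of transitive $\Sigma_j$-sets; one can check this for small $j$ and then confirm it in general by verifying that it inverts the trivial series. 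Substituting and using that $\Ind{\Sigma_{n-j} \times \Sigma_j}{\Sigma_n}(\mathbf{1} \boxtimes \mathrm{sgn}_{\Sigma_j})$ is exactly $\lambda^{j}([n])$ then gives $\{\GGm^{[n]}\} = \sum_{j} (-1)^{j} \lambda^{j}([n]) \LL^{n-j}$, as wanted.

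I expect the main obstacle to be this last step: the explicit inversion of the convolution series and the identification of $(-1)^{[j]}$ with the sign class. This is a purely combinatorial computation in the Burnside ring, but it is delicate — it is precisely where subsets, the regular representation and the sign genuinely enter — and it is most cleanly organized either by an induction on $j$ together with Frobenius reciprocity, or by recognizing the generating-series inversion as the classical relation between the complete and elementary symmetric-function analogues inside $\bigoplus_j \K0(\cgVar{\Sigma_j}_k)$.
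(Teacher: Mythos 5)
Your reduction to the universal case and your first half are sound, and they coincide with the paper's own route: restriction along $G \to \Sigma_n$ is compatible with $\LL$, powers and the lambda operations, and applying the Binomial Theorem to $\{\GGm\} = \LL + (-1)$, identifying $\LL^{[i]} = \LL^i$ via the bundle relation and pulling the $\LL$-powers out by $\ZZ[\LL]$-linearity is exactly Proposition~\ref{prop-class-torus-raw}. The genuine gap is the crux identity
$$
\Ind{\Sigma_{n-j}\times\Sigma_j}{\Sigma_n}\left(\mathbf{1}\boxtimes(-1)^{[j]}\right) = (-1)^j\lambda^j([n]),
$$
which you assert but do not prove, and your proposed route to it would fail as written. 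There is no well-defined element ``$\mathrm{sgn}_{\Sigma_j}$'' of $\Bu(\Sigma_j)$: the linearization map $\Bu(\Sigma_j)\to R_\QQ(\Sigma_j)$ has nontrivial kernel already for $j = 3$ (four conjugacy classes of subgroups against three rational irreducibles), so ``the sign written as a virtual combination of transitive $\Sigma_j$-sets'' does not determine an element, and defining it as the $\ast$-inverse coefficient merely renames $(-1)^{[j]}$. (In $\K0(\cgVar{\Sigma_j}_k)$ itself the notion is even emptier: by relation R2 every $d$-dimensional linear representation has class $\LL^d$.) Moreover, both of your proposed confirmations — the classical complete-versus-elementary symmetric function inversion, or character computations — take place after linearization, where the identity $(-1)^{[j]} \mapsto (-1)^j\,\mathrm{sgn}$ and the exterior-power formula for permutation modules are indeed classical; but the theorem asserts an identity between specific virtual $G$-sets in $\K0(\cgVar{G}_k)$, and an identity verified modulo the kernel of linearization is strictly weaker (the Burnside classes involved are separated, e.g., by descent along torsors, as in Theorem~\ref{thm-field-torus}). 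Uniqueness of the $\ast$-inverse does yield an honest closed formula, $(-1)^{[j]} = \sum_{r\geq 1}(-1)^r\sum\left[\Sigma_j/(\Sigma_{c_1}\times\cdots\times\Sigma_{c_r})\right]$ summed over compositions $c_1+\cdots+c_r = j$, but neither this nor the subsequent induced identity is carried out in your sketch; a faithful check would have to use something like the mark homomorphism (fixed points at all subgroups of $\Sigma_n$), not characters.

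The step you call exact, $\Ind{\Sigma_{n-j}\times\Sigma_j}{\Sigma_n}(\mathbf{1}\boxtimes\mathrm{sgn}) = \lambda^j([n])$, is also precisely where the choice of lambda structure bites. If $\lambda^j(S)$ meant the naive $G$-set of $j$-element subsets of $S$ (which is also a lambda structure on $\Bu(G)$), the identity and the theorem would be false: for $n = 2$, stratifying $\AA^2$ equivariantly gives $\{\GGm^{[2]}\} = \LL^2 - [\Sigma_2/e]\,\LL + \left([\Sigma_2/e] - 1\right)$, so the constant coefficient is the virtual class $[\Sigma_2/e] - 1$ and not $1$, the class of the one-point set of $2$-subsets of $[2]$. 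The $\lambda^i$ of the theorem are the Knutson/Bouc--Rökaeus operations, and proving the crux identity for them at the Burnside level is the actual content of the paper: Lemma~\ref{lemma-minus-one} computes $(-1)^{[j]} = -\rLef_{\Omega_{\leq j}([j]_+)}$ via Bouc's Lemme~4 and an explicit equivariant homotopy equivalence; Lemma~\ref{lemma-induce-lefschetz} shows Lefschetz invariants commute with induction; Lemma~\ref{lemma-induce-powerset} identifies the induced subset posets with flag posets $\Omega_{\leq j', j}([n])$, whose redundant flag datum is contracted away by the chain-counting argument in the final proof; and Lemma~\ref{lemma-br} converts the outcome into $(-1)^j\lambda^j([n])$. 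Your sketch — small cases, an unspecified induction with Frobenius reciprocity, or the symmetric-function analogy — replaces all of this with an expectation, so as it stands the proposal reproduces the easy half of the paper's argument and leaves the hard half unproved.
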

\noindent
Here $\lambda^i$ denotes the natural lambda ring operations on
$\K0(\cgVar{G}_k)$.

The significance of the $G$-variety $\GGm^S$ is that it can be regarded as a
\term{universal quasi-split torus}. Recall that a \term{split torus} over a
field $k$ is a finite product of $n$ copies of the group $\GGm$. In general, an
algebraic group $T$ over $k$ is a torus if the base change $T_{\bar{k}}$ to an
algebraic closure $\bar{k}$ of $k$ is a split torus. We call $T$
\term{quasi-split} if it is the Weil-restriction
of the group $\GGm$ along a map $\Spec L \to \Spec k$, where $L$ is a separable
algebra over $k$. In particular, the rational points of such a torus is simply
the group $L^\times$ of units in $L$.

Every rank $n$ quasi-split torus can be obtained from $\GGm^{[n]}$ via descent
in the following way. Given a separable $k$-algebra $L$ of degree $n$, we can form
the configuration space $\Conf^n(\Spec L)$, which is a $\Sigma_n$-torsor over
$k$. Descent along this torsor gives a functor $\cgVar{\Sigma_n}_k \to \cVar_k$,
and $L^\times$ is the image of $\GG_m^{[n]}$ under this functor. The functor
induces a lambda-ring homomorphism $\K0(\cgVar{\Sigma_n}_k) \to \K0(\cVar_k)$,
and we obtain the following result from \cite{rokaeus2007} as a corollary to our
main theorem.

\begin{theorem}[Rökaeus]
\label{thm-field-torus}
Let $L$ be a finite separable algebra of degree $n$ over the field $k$.
Then the class of the torus $L^\times$ equals
$$
\sum_{i = 0}^{n} (-1)^i\lambda^i(\Spec(L))\LL^{n-i}
$$
in $\K0(\cVar_k)$.
\end{theorem}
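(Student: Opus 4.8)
The plan is to obtain this statement as an immediate corollary of Theorem \ref{thm-equiv-torus} by transporting the equivariant formula through the descent homomorphism attached to $L$. First I would specialize Theorem \ref{thm-equiv-torus} to the case $G = \Sigma_n$ and $S = [n]$, so that in $\K0(\cgVar{\Sigma_n}_k)$ one has
$$
\{\GGm^{[n]}\} = \sum_{i = 0}^{n} (-1)^i \lambda^i([n]) \LL^{n-i}.
$$
The degree $n$ separable algebra $L$ determines a $\Sigma_n$-torsor $\Conf^n(\Spec L)$ over $k$, and descent along this torsor induces a lambda-ring homomorphism $\phi \colon \K0(\cgVar{\Sigma_n}_k) \to \K0(\cVar_k)$. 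The proof amounts to applying $\phi$ to the displayed identity and identifying the image of each term.

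The heart of the matter is to compute $\phi$ on the three kinds of terms appearing above. For the left-hand side, descent of $\GGm^{[n]}$ along $\Conf^n(\Spec L)$ yields the Weil restriction of $\GGm$ along $\Spec L \to \Spec k$, whose class is the torus $L^\times$; this is precisely the relationship recorded before the statement, so $\phi(\{\GGm^{[n]}\}) = \{L^\times\}$. The Lefschetz class $\LL$ is represented by $\AA^1$ with trivial $\Sigma_n$-action, and descent of a variety carrying the trivial action returns the same variety over $k$, whence $\phi(\LL) = \LL$. Finally, $\phi$ sends the class of the permutation set $[n]$ — that is, $n$ copies of $\Spec k$ permuted by $\Sigma_n$ — to the class of $\Spec L$, since descent of the standard permutation $\Sigma_n$-set along $\Conf^n(\Spec L)$ reconstructs the étale algebra $L$ by the very definition of the associated torsor. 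Because $\phi$ is a homomorphism of lambda-rings, it commutes with the operations $\lambda^i$, and therefore $\phi(\lambda^i([n])) = \lambda^i(\phi(\{[n]\})) = \lambda^i(\Spec L)$.

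Assembling these computations, applying $\phi$ to the specialized formula gives
$$
\{L^\times\} = \sum_{i = 0}^{n} (-1)^i \lambda^i(\Spec L) \LL^{n-i}
$$
in $\K0(\cVar_k)$, which is the desired identity. I expect the only genuine obstacle to be the verification that descent along $\Conf^n(\Spec L)$ carries the permutation set $[n]$ to $\Spec L$ and $\GGm^{[n]}$ to the torus $L^\times$; this is the bridge between the equivariant computation of Theorem \ref{thm-equiv-torus} and the arithmetic of the torus. Once this correspondence is established, together with the facts that $\phi$ preserves $\LL$ and respects the lambda operations, the argument is purely formal bookkeeping.
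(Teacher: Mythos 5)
Your proposal is correct and is essentially the paper's own argument: the paper derives Theorem~\ref{thm-field-torus} as a corollary of Theorem~\ref{thm-equiv-torus} precisely by descending along the $\Sigma_n$-torsor $\Conf^n(\Spec L)$, using that this induces a lambda-ring homomorphism $\K0(\cgVar{\Sigma_n}_k) \to \K0(\cVar_k)$ sending $\{\GGm^{[n]}\}$ to $\{L^\times\}$, $[n]$ to $\Spec L$, and fixing $\LL$. The bookkeeping you describe, including the commutation with the operations $\lambda^i$, matches the paper's reasoning exactly.
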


The true universal nature of $\GG_m^{[n]}$ is best understood in the language
of algebraic stacks. The stack quotient $[\GG_m^{[n]}/\Sigma_n]$ is a group
object over the classifying stack $\BB \Sigma_n$. A $\Sigma_n$-torsor over $k$
corresponds to a map $\Spec k \to \BB \Sigma_n$, and $L^\times$ is simply the
pull-back of $[\GG_m^{[n]}/\Sigma_n]$ along the map corresponding to
$\Conf^n(\Spec L)$. Since the category of $G$-equivariant varieties is
equivalent to the category of varieties over the base $\BB G$, we can reformulate
Theorem~\ref{thm-equiv-torus} as follows:

\begin{theorem}
\label{thm-stack-torus}
Let $G$ be a finite group and $S$ a finite $G$-set with $n$ elements. Then the
class of $[\GGm^S/G]$ is equal to
$$
\sum_{i = 0}^n (-1)^i\lambda^i([S/G])\LL^{n-i}
$$
in $\K0(\cVar_{\BB G})$.
\end{theorem}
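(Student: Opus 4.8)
The plan is to deduce Theorem~\ref{thm-stack-torus} from Theorem~\ref{thm-equiv-torus} by transporting the latter along the equivalence of categories $\cgVar{G}_k \simeq \cVar_{\BB G}$ recalled above. This equivalence sends a $G$-equivariant variety $X$ to the quotient stack $[X/G]$, which is an object of $\cVar_{\BB G}$ via its structure morphism $[X/G] \to \BB G$; a quasi-inverse pulls an object of $\cVar_{\BB G}$ back along the atlas $\Spec k \to \BB G$ classifying the trivial $G$-torsor, recording the residual $G$-action on the fiber. First I would verify that this is compatible with all of the structure used to build the Grothendieck rings. Concretely, it takes disjoint unions to disjoint unions and closed–open decompositions to closed–open decompositions, so the scissor relations on the two sides correspond; and it takes the cartesian product of $G$-varieties, equipped with the diagonal action, to the fiber product over $\BB G$, via the canonical identification $[X/G] \times_{\BB G} [Y/G] \cong [(X\times_k Y)/G]$. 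Granting this, the equivalence descends to a ring isomorphism
$$
\K0(\cgVar{G}_k) \xrightarrow{\ \sim\ } \K0(\cVar_{\BB G}).
$$

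Next I would check that this is an isomorphism of $\lambda$-rings. Since the $\lambda$-operations on both sides are determined by the symmetric-power operations $\sigma^m$, it is enough to see that the equivalence intertwines these, i.e. that the $m$-th symmetric power of $[X/G]$ in $\cVar_{\BB G}$ is $[\,\mathrm{Sym}^m X / G\,]$, where $\mathrm{Sym}^m X = X^m/\Sigma_m$ carries the induced $G$-action. With this in hand the operators $\lambda^i$ agree across the isomorphism.

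It then remains to track the three ingredients of the formula of Theorem~\ref{thm-equiv-torus}. The class $\{\GGm^S\}$ maps to $\{[\GGm^S/G]\}$; the Lefschetz class $\LL$, represented by $\AA^1$ with trivial $G$-action, maps to the Lefschetz class $\LL = \{\AA^1_{\BB G}\}$ in $\K0(\cVar_{\BB G})$; and the $G$-set $S$, viewed as the zero-dimensional $G$-variety $\Spec$ of its structure algebra, maps to $[S/G]$, so that $\lambda^i(S)$ maps to $\lambda^i([S/G])$. Applying the $\lambda$-ring isomorphism to the identity of Theorem~\ref{thm-equiv-torus} therefore yields
$$
\{[\GGm^S/G]\} = \sum_{i=0}^n (-1)^i \lambda^i([S/G]) \LL^{n-i},
$$
which is the assertion of Theorem~\ref{thm-stack-torus}.

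I expect the main obstacle to lie in the compatibility checks at the level of stacks rather than in the final computation, which is purely formal. In particular, the identification $[X/G]\times_{\BB G}[Y/G]\cong[(X\times_k Y)/G]$ and the commutation of the equivalence with symmetric powers must be carried out carefully enough to guarantee that the induced map on Grothendieck groups is multiplicative and respects the $\sigma$-operations; verifying that $\K0(\cVar_{\BB G})$, defined through its scissor and product relations over $\BB G$, matches $\K0(\cgVar{G}_k)$ term by term is where the real work sits. Once this bookkeeping is complete, Theorem~\ref{thm-stack-torus} is an immediate translation of Theorem~\ref{thm-equiv-torus}.
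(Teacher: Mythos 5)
Your proposal is correct and takes essentially the same route as the paper, which offers no separate proof of Theorem~\ref{thm-stack-torus} but states it as a direct reformulation of Theorem~\ref{thm-equiv-torus} under the equivalence $\cgVar{G}_k \simeq \cVar_{\BB G}$ recalled in the introduction and preliminaries. You merely make explicit the compatibility bookkeeping (scissor relations, the identification $[X/G]\times_{\BB G}[Y/G]\cong[(X\times_k Y)/G]$, symmetric powers and hence $\lambda$-operations, and the matching of $\LL$ and $[S/G]$) that the paper leaves implicit; to be fully complete you would also note that the equivalence matches the vector-bundle relation \textbf{R2} on both sides, which is immediate since $G$-equivariant vector bundles correspond to vector bundles on quotient stacks over $\BB G$.
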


The need for explicit expressions for classes of universal quasi-split tori arose
in \cite{bergh2014}, where I computed the classes of certain classifying stacks
in the Grothendieck group of stacks. It turns out that classifying stacks of
monomial matrices are intimately related to such tori. In this article, the bigger
class of stably rational tori is also studied.

It should be pointed out that the proof of Theorem~\ref{thm-field-torus} by
Rökaeus could easily be adapted to the universal setting. However,
using the Binomial Theorem has its merits in its conceptual clearness. For
instance, the method could also be used to compute the class of the Weil
restriction along a finite étale map of any variety whose class can be expressed
as a polynomial in $\LL$. It should also be pointed out that in this article we
use a considerably down-powered version of Bouc's machinery. A remark about
this concludes the last section. As a final remark, it should be said that the
choice of working over a field is done purely for psychological reasons; we
could equally well have worked in the relative setting over an arbitrary
scheme or algebraic stack.

The rest of the article is organised into three sections. In the first, we
review some basic facts about equivariant Grothendieck rings of varieties
and Burnside rings. In the second, we construct the exponential function and
prove the Binomial Theorem. In the last, we relate the exponential
function to the corresponding construction for Burnside rings and give the
proof of Theorem~\ref{thm-equiv-torus}.

\subsection*{Acknowledgements}
I would like to thank my advisor, David Rydh, for suggesting several
improvements of this text.
%
%\vspace{3 mm}\noindent
%{\bf Acknowledgements.} I am greatly indebted to Professor Moriarty for
%him generously sharing the deeper ideas underlying his treatise upon the
%binomial theorem.

\section*{Preliminaries}
By \term{variety} in this paper, we will always mean a finitely presented
algebraic space over a field $k$, and we denote the category of such object
by $\cVar_k$. Given a finite group $G$, we denote by $\cgVar{G}_k$ the category
whose objects are varieties endowed with a $G$-action and whose morphisms are
$G$-equivariant maps.

\subsection*{Equivariant Grothendieck rings}
Given a finite group $G$, we define the \term{Grothendieck group of
$G$-equivariant varieties} $\K0(\cgVar{G}_k)$ as the free group on the
isomorphism classes $\{X\}$ of objects $X$ in $\cgVar{G}_k$, subject to the
following relations:
\begin{enumerate}
\item[{\bf R1}] If $Z \subset X$ is a closed, $G$-invariant subvariety, then
$\{X\} = \{X\setminus Z\} + \{Z\}$.
\item[{\bf R2}] If $E \to X$ is a $G$-equivariant vector bundle of rank $d$,
then $\{E\} = \{\AA^d\times X\}$.
\end{enumerate}
The class of the affine line $\AA^1_k$ with the trivial $G$-action is called
the \term{Lefschetz class} and is denoted by $\LL$.

The group $\K0(\cgVar{G}_k)$ has a structure of commutative ring with identity
induced by the categorical product in $\cgVar{G}_k$. We sometimes call
$\K0(\cgVar{G}_k)$ the \term{$G$-equivariant Grothendieck ring} if we want to
emphasise its multiplicative structure. We have an obvious ring homomorphism
from $\ZZ[\LL]$ to $\K0(\cgVar{G}_k)$, which turns out to be injective, and
sometimes, we prefer to think of the group $\K0(\cgVar{G}_k)$ as a
$\ZZ[\LL]$-module.

The usual Grothendieck ring of varieties, which we denote by $\K0(\cVar_k)$, is
recovered from the definition above if we take $G$ to be the trivial group. In
this case, the relation R2 is redundant.

Equivariant Grothendieck rings for different Grothendieck groups are related 
to each other by various operations. Given finite groups $G$ and $H$, we
have a functor $\boxtimes\colon\cgVar{G}_k \times \cgVar{H}_k \to \cgVar{G\times
H}_k$ taking a pair $(X, Y)$ to $X\times Y$ endowed with its natural
$G\times H$-action. This functor induces a corresponding
\term{outer product}
$$
\boxtimes\colon\K0(\cgVar{G}_k) \times \K0(\cgVar{H}_k) \to \K0(\cgVar{G\times
H}_k)
$$
on the Grothendieck groups. It is easy to see that this operation is
$\ZZ[\LL]$-bilinear.

Assume that we have an injective group homomorphism $H \to G$. Then there is
natural functor $\cgVar{G}_k \to \cgVar{H}_k$ given by restricting the group
action along $H \to G$. This functor has a left adjoint given by induction, and
the adjoint pair induces a pair of maps
$$
\Res{H}{G}\colon\K0(\cgVar{G}_k) \to \K0(\cgVar{H}_k),\qquad
\Ind{H}{G}\colon\K0(\cgVar{H}_k)\to \K0(\cgVar{G}_k)
$$
between the corresponding equivariant Grothendieck rings. The map $\Res{H}{G}$
is a ring homomorphism and $\Ind{H}{G}$ is a group homomorphism. In addition,
the map $\Ind{H}{G}$ is $\K0(\cgVar{G}_k)$-linear in the sense that
it satisfies the \term{projection formula}
$$
y\cdot\Ind{H}{G}(x) = \Ind{H}{G}\left(\Res{H}{G}(y)\cdot x\right)
$$
for $x \in \K0(\cgVar{H}_k)$ and $y \in \K0(\cgVar{G}_k)$. By applying this to
the class $\LL$, we see that $\Ind{H}{G}$ is a $\ZZ[\LL]$-module homomorphism.

The equivariant Grothendieck rings are also endowed with lambda ring
structures, and the restriction maps are compatible with these structures.
In this article, we will only need the lambda operations for classes which
come from the Burnside ring, as will be described below.

The equivariant Grothendieck ring $\K0(\cVar_k)$ can also be thought of
as the Grothendieck ring for the category $\cVar_{\BB G}$. This is the
category of algebraic stacks over the classifying stack $\BB G$ whose
structure morphisms are representable by finitely presented algebraic spaces.
In this context, the maps $\Res{H}{G}$ and $\Ind{H}{G}$ are induced by
the functors $f^\ast$ and $f_!$, where $f\colon\BB H \to \BB G$ is the
map between the classifying stacks induced by the inclusion of groups.
The functor $f^\ast$ is simply pull-back along $f$, and $f_!$ is obtained
by post composition with $f$. Although conceptually appealing, this
point of view will not be used in the rest of the exposition.

\begin{remark}
The admittedly somewhat unusual use of the term variety requires some
explanation. On one hand, the class of algebraic spaces are much better behaved
with respect to quotients by finite groups than traditional varieties.
For instance, the functor $\cgVar{G}_k \to \cVar_k$ induced by descent
along a $G$-torsor, which was mentioned in the introduction, does not exist
for classical varieties. On the other hand, the rings $\K0(\cVar_k)$ which we
get if we use either the classical definition of variety, or our definition,
are canonically isomorphic. Hence there seems to be little motivation for
using the term \term{Grothendieck group of algebraic spaces} rather than the
much more well-established \term{Grothendieck group of varieties}.

If one prefers to work with classical varieties, one can follow
the approach used by Heinloth-Bittner in \cite{bittner2004},
and define a $G$-variety as a classical variety endowed with
a \term{good} $G$-action. The results in this article regarding
the class of universal quasi-split tori would be equally valid
in that setting.
\end{remark}

\subsection*{Burnside rings}
The \term{Burnside ring} $\Bu(G)$ for a finite group $G$ is defined as the
Grothendieck ring of the monoid of isomorphism classes of finite
$G$-sets. The multiplication and addition in this ring are induced by
the categorical product and coproduct respectively. The ring $\Bu(G)$
is also endowed with naturally defined lambda operators $\lambda^i$, giving
it the structure of a (non-special) lambda ring. For an introduction
on the Burnside rings and its lambda structure, see \cite{knutson1973}.

Just as with the equivariant Grothendieck rings, we can relate Burnside rings
for different groups $G$ and $H$ with the bilinear operation
$\boxtimes\colon \Bu(G)\times\Bu(H) \to \Bu(G\times H)$. Given an injective
group homomorphism $H \to G$, we also have the maps $\Res{H}{G}$ and $\Ind{H}{G}$
with the former being a ring homomorphism and the latter being a group
homomorphism. The pair also satisfies the projection formula.

We will use the Burnside rings for computing certain coefficients in
$\K0(\cVar_k)$. There is a natural ring homomorphism $\Bu(G) \to
\K0(\cgVar{G}_k)$ which is induced by the functor taking a $G$-set to the same
$G$-set viewed as a $G$-variety. The ring homomorphism respects the lambda
structure, and is compatible with the operations $\boxtimes$, $\Ind{H}{G}$ and
$\Res{H}{G}$ in the obvious sense.

As a computational aid when performing calculations in the Burnside ring, we use
the \term{Lefschetz invariant}. A self contained introduction to these techniques
is given in \cite[§4]{bouc2000}, and will follow the notation used in this source.
The Lefschetz invariant is defined on the class of $G$-posets. By a $G$-poset,
we mean a finite partially ordered set endowed with a $G$-action, The $G$-action
is required to respect the ordering in the sense that $gx \leq gy$ for
all related pairs $x \leq y$ in the poset and all group elements $g\in G$.
Given a $G$-poset $P$, we denote the set of $i$-chains $a_0 < \ldots < a_i$ by $\Sd_i$.
This set has a natural $G$-action, and we can consider its class in the
Burnside ring, which we also denote by $\Sd_i$. The \term{Lefschetz invariant}
of a $G$-poset $P$ is defined as the element
$$
\Lef_P = \sum_{i \geq 0} (-1)^i\Sd_i P
$$
in $\Bu(G)$. Sometimes, it is more convenient to use the \term{reduced
Lefschetz invariant}, which is defined as $\rLef_P = \Lef - 1$, where $1$
denotes the class of the trivial $G$-representation. A fundamental fact is that
every element in $\Bu(G)$ can be represented as $\Lef_P$ for some $G$-poset $P$
\cite[Lemme 2]{bouc1992}. In particular, we have $\Lef_{S} = \{S\}$ for
every $G$-set $S$, where we on the left hand side regard $S$ as a
discrete $G$-poset.

A morphism between two $G$-posets is defined as a $G$-equivariant,
order preserving map. There is a concept of \term{homotopy} for such
morphisms. We say that the morphisms $f,g\colon P\to Q$ are \term{homotopic}
if they are comparable in the pointwise ordering of functions.
This gives a notion of \term{homotopy equivalence} of $G$-posets. We
have the relation  $\Lef_P = \Lef_Q$ in the Burnside ring provided
that $P$ and $Q$ are homotopy equivalent. In particular, the reduced
Lefschetz invariant for a poset $P$ vanishes if $P$ has a maximal or
minimal element. Indeed, this would imply that $P$ is homotopy equivalent
with the $G$-poset with one element.

For lack of reference, we include a proof of the following basic fact.

\begin{lemma}
\label{lemma-induce-lefschetz}
Let $H \subset G$ be an inclusion of finite groups, and let $P$ be an $H$-poset.
Then we have the equality $\Ind{H}{G}\Lef_P = \Lef_{\Ind{H}{G}P}$ in $\Bu(G)$.
\end{lemma}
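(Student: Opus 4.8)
The plan is to reduce the statement to the level of individual chain sets and then exploit the fact that an induced poset is, after forgetting the group action, a disjoint union of copies. Since $\Ind{H}{G}\colon\Bu(H)\to\Bu(G)$ is a homomorphism of groups it is additive, so from the definition of the Lefschetz invariant we obtain
\[
\Ind{H}{G}\Lef_P = \sum_{i\geq 0}(-1)^i\,\Ind{H}{G}(\Sd_i P),
\qquad
\Lef_{\Ind{H}{G}P} = \sum_{i\geq 0}(-1)^i\,\Sd_i(\Ind{H}{G}P).
\]
Comparing these two alternating sums term by term, it suffices to establish the identity $\Ind{H}{G}(\Sd_i P) = \Sd_i(\Ind{H}{G}P)$ of classes in $\Bu(G)$ for every $i$. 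In fact I would prove the stronger statement that the underlying $G$-sets are isomorphic.

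Next I would unwind the induced poset. Writing $\Ind{H}{G}P = G\times_H P$ and choosing coset representatives $g_1,\dots,g_m$ for $G/H$, the poset $\Ind{H}{G}P$ is, as a mere poset, the disjoint union $\coprod_{j=1}^m g_j P$ of $m = [G:H]$ copies of $P$, with order relations holding only between elements of a common copy. The essential observation is that two elements lying in distinct cosets are incomparable, so any chain $a_0 < \dots < a_i$ in $\Ind{H}{G}P$ is forced to lie entirely inside a single copy $g_j P$, where it corresponds to a unique $i$-chain of $P$. Consequently the set of $i$-chains of $\Ind{H}{G}P$ is $\coprod_{j=1}^m g_j(\Sd_i P)$, which is precisely the induced $G$-set $G\times_H(\Sd_i P) = \Ind{H}{G}(\Sd_i P)$.

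It remains to check that this bijection is $G$-equivariant, which is immediate since $G$ permutes the copies $g_j P$ exactly as it permutes the components of $G\times_H(\Sd_i P)$, acting through $H$ within each copy. Substituting the resulting equalities $\Sd_i(\Ind{H}{G}P) = \Ind{H}{G}(\Sd_i P)$ back into the two sums above yields $\Ind{H}{G}\Lef_P = \Lef_{\Ind{H}{G}P}$. The only real content, and the step to carry out with care, is the claim that the order on $\Ind{H}{G}P$ leaves elements of distinct cosets incomparable; once the poset structure on the induced object is pinned down precisely, so that $[g,p]\leq[g',p']$ forces $gH = g'H$, both the chain bijection and its equivariance become routine.
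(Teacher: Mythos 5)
Your proposal is correct and takes essentially the same route as the paper: both reduce the claim termwise to the $G$-set isomorphism $\Ind{H}{G}\Sd_i P \cong \Sd_i\Ind{H}{G}P$, whose whole content is that comparable elements of $\Ind{H}{G}P$ must lie in the same coset copy of $P$, so every chain lives in a single copy. The paper packages this as an explicit map $\eta_i\colon \Ind{H}{G}\Sd_i P \to \Sd_i\Ind{H}{G}P$ shown to be bijective, which is just your disjoint-union-of-copies argument in different clothing.
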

\begin{proof}
The $G$-poset $\Ind{H}{G}P$ consists of the set of equivalence classes
of pairs $(g, x)$ with $g \in G$ and $x \in P$. The ordering on $\Ind{H}{G}P$
is given by the relations $(g, x) \leq (g, x')$ for all $g \in G$ and
all relations $x \leq x'$ in $P$. The equivalence class containing $(g, x)$
is the set of elements on the form $(gh, h^{-1}x)$ for $h\in H$.
Fix a natural number $i$. The $G$-set $\Ind{H}{G}\Sd_iP$
is the set of equivalence classes of pairs $(g, x_0 < \cdots < x_i)$,
with the equivalence relation defined in the obvious way. We have a
natural $G$-equivariant map
$$
\eta_i\colon \Ind{H}{G}\Sd_iP \to \Sd_i\Ind{H}{G}P
$$
taking an element $(g, x_0 < \cdots < x_i)$ to the chain
$(g, x_0) < \cdots < (g, x_i)$. It is clear that $\eta_i$ is injective.
Furthermore, any chain $(g_0, x_0) < \cdots < (g_i, x_i)$ can be written
on the equivalent form $(g_0, x_0) < (g_0, x'_1) <\cdots < (g_0, x'_i)$,
which proves that it lifts to an element in $\Ind{H}{G}\Sd_iP$.
Hence $\eta_i$ is an isomorphism, which proves the lemma.
\end{proof}

\section*{The exponential function}
Inspired by the corresponding construction for Burnside rings
\cite{bouc1992}, we introduce an exponential function on the Grothendieck
ring of varieties. We define the group
$$
\pK(\cVar_k) = \prod_{i \geq 0} \K0(\cgVar{\Sigma_i}_k).
$$
The elements of $\pK(\cVar_k)$ are denoted as formal power series
$\sum_{i \geq 0} a_iT^i$ in the symbol $T$. Note that each of the
coefficients $a_i$ lie in a different group $\K0(\cgVar{\Sigma_i}_k)$.
The group $\pK(\cVar_k)$ is given a multiplicative structure, via the binary
operation $\ast$, which is defined by
$$
\left(\sum_{i \geq 0} a_iT^i\right)\ast\left(\sum_{i \geq 0} b_iT^i\right) 
= \sum_{n \geq 0} \left(\sum_{i+j = n}
\Ind{\Sigma_i\times\Sigma_j}{\Sigma_n}(a_i \boxtimes b_j)\right)T^n.
$$
This gives $\pK(\cVar_k)$ the structure of a ring. We let $\pU(\cVar_k)$
denote the subset of $\pK(\cVar_k)$ of elements of the form
$\sum_{i \geq 0} a_iT^i$ with $a_0 = 1$.   
\begin{prop}
The operation $\ast$ on the group $\pK(\cVar_k)$ gives $\pK(\cVar_k)$ the
structure of a commutative ring with identity. The identity is given by
$1 \in \K0(\cVar_k)$. Moreover, the subset $\pU(\cVar_k) \subset \pK(\cVar_k)$
is a group under the operation $\ast$.
\end{prop}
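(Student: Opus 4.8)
The plan is to check the ring axioms for $\ast$ one at a time, in each case reducing to the formal properties of the outer product $\boxtimes$ and of induction recorded in the preliminaries, and then to produce inverses in $\pU(\cVar_k)$ by a recursion on the degree. The three easy axioms I would dispatch first. For commutativity, the coefficient of $T^n$ in $a\ast b$ is $\sum_{i+j=n}\Ind{\Sigma_i\times\Sigma_j}{\Sigma_n}(a_i\boxtimes b_j)$; swapping the factors is an isomorphism $\Sigma_i\times\Sigma_j\cong\Sigma_j\times\Sigma_i$ carrying $a_i\boxtimes b_j$ to $b_j\boxtimes a_i$, and the two resulting embeddings into $\Sigma_n$ are conjugate by the permutation exchanging the two blocks, so that the $(i,j)$ term of $a\ast b$ equals the $(j,i)$ term of $b\ast a$. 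For the identity I would take the series with constant term $1\in\K0(\cVar_k)=\K0(\cgVar{\Sigma_0}_k)$ and all higher coefficients zero: in $1\ast b$ only the summand $i=0$ survives, and since $\Sigma_0$ is trivial we have $\Sigma_0\times\Sigma_n=\Sigma_n$ and $1\boxtimes b_n=b_n$, giving $1\ast b=b$. Distributivity is immediate from the bilinearity of $\boxtimes$ together with the additivity of $\Ind{\Sigma_i\times\Sigma_j}{\Sigma_n}$.

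The substantial point is associativity, and the hard part will be matching up the subgroups and conjugations correctly. I would base it on two compatibilities. First, transitivity of induction, $\Ind{K}{G}\circ\Ind{H}{K}=\Ind{H}{G}$ for nested subgroups $H\subset K\subset G$. Second, the identity $\Ind{H}{G}(x)\boxtimes y=\Ind{H\times G'}{G\times G'}(x\boxtimes y)$ for $H\subset G$ and $y\in\K0(\cgVar{G'}_k)$, expressing that induction commutes with taking the external product with a fixed group. Granting these, I would expand the coefficient of $T^n$ in $(a\ast b)\ast c$, use bilinearity to bring the inner induction past $c_k$ via the second compatibility (with $H=\Sigma_i\times\Sigma_j$, $G=\Sigma_{i+j}$ and $G'=\Sigma_k$), and then collapse the composite $\Ind{\Sigma_{i+j}\times\Sigma_k}{\Sigma_n}\circ\Ind{\Sigma_i\times\Sigma_j\times\Sigma_k}{\Sigma_{i+j}\times\Sigma_k}$ to $\Ind{\Sigma_i\times\Sigma_j\times\Sigma_k}{\Sigma_n}$ by transitivity. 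This produces the manifestly symmetric expression
$$
\sum_{i+j+k=n}\Ind{\Sigma_i\times\Sigma_j\times\Sigma_k}{\Sigma_n}\left(a_i\boxtimes b_j\boxtimes c_k\right),
$$
and running the identical computation on $a\ast(b\ast c)$ yields the same sum, giving associativity.

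Finally, to see that $\pU(\cVar_k)$ is a group I would first note closure: if $a_0=b_0=1$ then the degree-zero coefficient of $a\ast b$ is $1\boxtimes 1=1$. For inverses, given $\sum_i a_iT^i$ with $a_0=1$, I would construct $\sum_i b_iT^i$ recursively by setting $b_0=1$ and, for $n\geq 1$, isolating the $i=0$ summand of the degree-$n$ coefficient of $a\ast b$ (which equals $b_n$, exactly as in the identity computation) to solve
$$
b_n=-\sum_{\substack{i+j=n\\ i\geq 1}}\Ind{\Sigma_i\times\Sigma_j}{\Sigma_n}\left(a_i\boxtimes b_j\right).
$$
Every $b_j$ occurring on the right has $j<n$, so this triangular system determines the inverse uniquely; commutativity makes it two-sided, and since $b_0=1$ the inverse again lies in $\pU(\cVar_k)$. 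Apart from associativity, whose only difficulty is the bookkeeping of the block embeddings, the whole argument reduces to bilinearity and the solvability of this recursion.
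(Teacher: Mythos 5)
Your proposal is correct and follows essentially the same route as the paper: associativity via the manifestly symmetric triple sum $\sum_{i+j+l=n}\Ind{\Sigma_i\times\Sigma_j\times\Sigma_l}{\Sigma_n}(a_i\boxtimes b_j\boxtimes c_l)$ (your two compatibilities --- transitivity of induction and induction commuting with the outer product --- are exactly the two ingredients of the paper's single displayed identity), the identity element via $\Ind{\Sigma_0\times\Sigma_n}{\Sigma_n}(1\boxtimes a_n)=a_n$, and inverses in $\pU(\cVar_k)$ by the same triangular recursion isolating the $i=0$ term. If anything, you are slightly more careful than the paper in the commutativity step, where you spell out that the two block embeddings of $\Sigma_i\times\Sigma_j$ and $\Sigma_j\times\Sigma_i$ into $\Sigma_n$ are conjugate, a point the paper subsumes under the symmetry of $\boxtimes$.
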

\begin{proof}
Recall that the operation $\Ind{H}{G}$ is linear for any inclusion $H
\to G$ of groups, and that the operation $\boxtimes$ is bilinear and symmetric.
From this, it follows easily that the operation $\ast$ is commutative
and distributes over addition. Next, we note that we have the identities
$$
\Ind{\Sigma_i\times\Sigma_{j+l}}{\Sigma_n}\left(a\boxtimes
\Ind{\Sigma_j\times\Sigma_l}{\Sigma_{j + l}}\left(b\boxtimes c\right)\right) =
\Ind{\Sigma_i\times\Sigma_j\times\Sigma_l}{\Sigma_n}\left(a\boxtimes
b\boxtimes c\right)
$$
for $a \in \K0(\cgVar{\Sigma_i}_k)$, $b \in \K0(\cgVar{\Sigma_j}_k)$
and~$c \in \K0(\cgVar{\Sigma_l}_k)$ with $i + j + l = n$. From this and the
symmetry of $\boxtimes$, it follows that the product of the three elements
$\sum a_iT^i$, $\sum b_iT^i$ and~$\sum c_iT^i$ can be written
$$
\sum_{n\geq 0}\left(\sum_{i+j+l =
n}\Ind{\Sigma_i\times\Sigma_j\times\Sigma_l}{\Sigma_n}
\left(a_i\boxtimes b_j\boxtimes c_l\right)\right)T^n
$$
regardless of the grouping of the parentheses, which proves associativity.
That $1 \in \K0(\cVar_k)$ is the identity, follows from the trivial
identity
$
\Ind{\Sigma_0\times\Sigma_n}{\Sigma_n} (1\boxtimes a) = a
$
for $a \in \K0(\cgVar{\Sigma_n}_k)$.

The set $\pU(\cVar_k)$ is clearly closed under multiplication.
Let $\sum a_i T^i$ be an element of $\pK(\cVar_k)$ with $a_0 = 1$. We wish to
construct an inverse $\sum b_j T^j$ with $b_0 = 1$. But this can be done
recursively, in exactly the same way as for usual power series, by defining
$b_n$ for each $n > 0$ such that the expression
$$
\sum_{i+j = n}\Ind{\Sigma_i\times\Sigma_j}{\Sigma_n}(a_i\boxtimes b_j)
$$
vanishes. Note that this can be done since we have
$\Ind{\Sigma_0\times\Sigma_n}{\Sigma_n}(a_0\boxtimes b_n) = b_n$ for all
$n > 0$ by the assumption on $a_0$, which allows us to solve for $b_n$
in each step of the recursion.
\end{proof}

Next, we explore the power operations which were mentioned in the
introduction. Let $G$ be a finite group and $S$ a finite $G$-set with $n$
elements. Recall that we defined the $G$-variety $X^S$ as the product
of $n$ copies of $X$ together with the natural $G$-action given by permuting the
factors of the product. We wish to examine its class $\{X^S\}$ in the
equivariant Grothendieck ring $\K0(\cgVar{G}_k)$.
First we note that $X^S$ can be obtained from the variety $X^{[n]}$ via
restriction along the group homomorphism $G \to \Sigma_n$ corresponding to the
action on $S$. Hence it is enough to consider the power operations for the
$\Sigma_n$-sets $[n]$ for various natural numbers $n$. Before we continue, we
need some basic results on powers of closed immersions and vector bundles.
\begin{prop}
\label{prop-power-prop}
Let $Z$ be a closed subvariety of $X$ in $\cVar_k$. Fix a natural number $n$,
and let $X_i$ denote the $\Sigma_n$-invariant closed subvariety of $X^{[n]}$
defined by requiring at least $j = n-i$ of the coordinates to lie in $Z$.
Then the sequence
$$
\emptyset = X_{-1}\subset Z^{[n]} = X_0 \subset \cdots \subset X_n = X^{[n]}
$$
is a filtration of closed immersions such that the stratum
$X_i\setminus X_{i-1}$ is isomorphic to
$\Ind{\Sigma_i\times \Sigma_j}{\Sigma_n}(X\setminus Z)^{[i]}\boxtimes Z^{[j]}$
for $0 \leq i \leq n$. In particular, we have the relation
$$
\left\{X^{[n]}\right\}
=
\sum_{i+j = n}\Ind{\Sigma_i\times\Sigma_j}{\Sigma_n}\left(
  \left\{(X\setminus Z)^{[i]}\right\}\boxtimes \left\{Z^{[j]}\right\}\right)
$$
in $\K0(\cgVar{\Sigma_n}_k)$. 
\end{prop}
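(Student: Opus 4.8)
The plan is to verify the geometric assertions about the filtration directly and then read off the class relation by applying the scissor relation \textbf{R1} termwise.

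First I would confirm that each $X_i$ is a $\Sigma_n$-invariant closed subvariety and that the chain is increasing. For a fixed subset $T \subseteq [n]$ with $|T| = j$, the locus where $x_t \in Z$ for every $t \in T$ is closed in $X^{[n]}$, being (up to reordering) a product of $j$ copies of $Z$ with $i = n-j$ copies of $X$. The subvariety $X_i$ is the union of these loci over all $j$-element subsets $T$, hence closed as a finite union, and it is $\Sigma_n$-invariant because permuting the coordinates permutes the defining subsets $T$. Since the requirement ``at least $j = n-i$ coordinates lie in $Z$'' becomes weaker as $i$ increases, we get $X_{i-1} \subseteq X_i$; the endpoints $X_0 = Z^{[n]}$, $X_n = X^{[n]}$, and $X_{-1} = \emptyset$ (where one would need more than $n$ coordinates in $Z$) are immediate.

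The heart of the matter is the identification of the strata. By construction $X_i \setminus X_{i-1}$ is the locally closed locus where exactly $j$ coordinates lie in $Z$ and the remaining $i$ lie in $X \setminus Z$. I would decompose it according to which coordinates fall in $X \setminus Z$: for each $i$-element subset $A \subseteq [n]$ the corresponding piece is isomorphic to $(X\setminus Z)^A \times Z^{[n]\setminus A}$, and $\Sigma_n$ permutes these $\binom{n}{i}$ pieces transitively. The stabiliser of the standard piece $A = \{1, \dots, i\}$ is precisely $\Sigma_i \times \Sigma_j$, acting there as $(X\setminus Z)^{[i]} \boxtimes Z^{[j]}$. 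Matching the cosets of $\Sigma_i \times \Sigma_j$ in $\Sigma_n$ with $i$-element subsets of $[n]$ then exhibits the stratum as $\Ind{\Sigma_i\times\Sigma_j}{\Sigma_n}\left((X\setminus Z)^{[i]} \boxtimes Z^{[j]}\right)$, using the description of $\Ind{H}{G}$ as $G\times^H(-)$ recalled in the proof of Lemma~\ref{lemma-induce-lefschetz}.

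Finally the class relation is formal. Applying \textbf{R1} to the closed $\Sigma_n$-invariant immersion $X_{i-1} \subseteq X_i$ gives $\{X_i\} = \{X_i \setminus X_{i-1}\} + \{X_{i-1}\}$, and telescoping from $i = 0$ to $i = n$ yields $\{X^{[n]}\} = \sum_{i=0}^n \{X_i \setminus X_{i-1}\}$. Substituting the stratum identification, and using that $\{-\}$ is compatible with $\Ind{\Sigma_i\times\Sigma_j}{\Sigma_n}$ and $\boxtimes$ by construction, gives the stated formula with $j = n-i$. I expect the only real obstacle to be the stratum identification: one must check that the bijection between $i$-subsets and cosets is $\Sigma_n$-equivariant and that the $\Sigma_i \times \Sigma_j$-action on the standard piece is exactly the outer product action, so that the isomorphism is one of $\Sigma_n$-varieties and not merely of underlying varieties. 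The closedness and the telescoping are routine.
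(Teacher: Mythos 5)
Your proof is correct, but it is organised around the opposite half of the adjunction from the paper's. The paper never constructs the isomorphism: it reduces via the Yoneda lemma to the case of an inclusion of sets and then verifies the \emph{universal property} of induction, showing that any $\Sigma_i\times\Sigma_j$-equivariant map $f$ out of the standard piece extends uniquely to a $\Sigma_n$-equivariant map $f^\natural(x) = \tau f(\tau^{-1}x)$ on the stratum. You instead build the isomorphism explicitly: decompose $X_i\setminus X_{i-1}$ into the $\binom{n}{i}$ pieces $(X\setminus Z)^A \times Z^{[n]\setminus A}$, observe that $\Sigma_n$ permutes them transitively with setwise stabiliser $\Sigma_i\times\Sigma_j$ at the standard piece, and identify the result with $\Sigma_n \contr{\Sigma_i\times\Sigma_j}\bigl((X\setminus Z)^{[i]}\times Z^{[j]}\bigr)$. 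The two arguments pivot on the same combinatorial fact --- your coset--subset matching is exactly the paper's observation that $\tau$ is unique up to right multiplication by $\Sigma_i\times\Sigma_j$ --- but they buy different things. Your route is more concrete and avoids the paper's somewhat terse ``by standard arguments, reduce to sets'' step, since your pieces are literal products of varieties; the one point you gloss, that the finitely many disjoint pieces are each closed and hence open in the stratum, so the decomposition is one of algebraic spaces and not just of points, is routine but worth a sentence. The paper's universal-property route never has to check that a constructed map is an isomorphism of spaces (uniqueness comes for free from adjunction), and the same template is reused verbatim for posets in Lemma~\ref{lemma-induce-powerset}, which is presumably why the author chose it. Your filtration checks and the telescoping application of \textbf{R1}, together with compatibility of $\{-\}$ with $\Ind{\Sigma_i\times\Sigma_j}{\Sigma_n}$ and $\boxtimes$, match the paper's concluding step exactly.
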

\begin{proof}
By the Yoneda Lemma, we may assume that $Z \subset X$ is an inclusion of
sheaves, and by standard arguments, we reduce to the case when $Z \subset X$ is
an inclusion of sets. Consider the following diagram:
$$
\xymatrix{
Z^{[i]}\boxtimes(X\setminus Z)^{[j]} \ar[r]^\eta \ar[dr]_f& 
  \Res{\Sigma_i\times\Sigma_j}{\Sigma_n} X_i\setminus X_{i-1}
  \ar[d]^{\Res{\Sigma_i\times\Sigma_j}{\Sigma_n}f^\natural}\\
& \Res{\Sigma_i\times\Sigma_j}{\Sigma_n} S
}
$$
Here $\eta$ is the $\Sigma_i\times \Sigma_j$-equivariant inclusion
obtained by restricting the identity map on $X^{[n]}$, and $f$ is an
arbitrary $\Sigma_i\times\Sigma_j$-equivariant function to an arbitrary
$\Sigma_n$-set $S$.
By the universal property of induction, it is enough to show that there exists a
unique $\Sigma_n$-equivariant function $f^\natural$ fitting into the diagram. As
a candidate, we take $f^\natural$ to be the function defined by
$$
x \mapsto \tau f(\tau^{-1}x)
$$
where $\tau \in \Sigma_n$ is a permutation such that the first $i$
coordinates of $\tau^{-1} x$ lie in $Z$. The permutation $\tau$ is
unique up to multiplication from the right by an element in
$\Sigma_i\times\Sigma_j$, so $f^\natural$ is well-defined by
$\Sigma_i\times\Sigma_j$-equivariance of $f$. Furthermore, the identities
$$
f^\natural(\sigma x) = \sigma\tau f(\tau^{-1}\sigma^{-1}\sigma x) = \sigma
f^\natural(x), 
$$
where $\sigma\in \Sigma_n$ is arbitrary and $\tau$ is chosen as above,
show that $f^{\natural}$ is $\Sigma_n$-equivariant.
Since the image of $\eta$ generates $X_i\setminus X_{i-1}$ as a $\Sigma_n$-set,
the map $f^\natural$ is also unique with respect to the desired property, which
establishes the isomorphism. The statement about the equivariant Grothendieck
ring follows by summing over the strata.
\end{proof}

Instead of just studying the operation $(-)^{[n]}$ for one fixed $n$,
it is beneficial to study the operations for all $n$ simultaneously.
We do this by using the ring $\pK(\cVar_k)$. Given a variety $X$, we consider
the element  $\sum \{X^{[i]}\}T^i$ in $\pK(\cVar_k)$. Since $\{X^{[0]}\} = 1$,
this element lies in $\pU(\cVar_k)$. Let $Z \subset X$ be a closed subvariety
of $X$. By Proposition~\ref{prop-power-prop} and the definition of
multiplication in the group $\pU(\cVar_k)$, we get the identity
$$
\sum_{n \geq 0}\left\{X^{[n]}\right\}T^n = 
\left(\sum_{i \geq 0} \left\{(X\setminus Z)^{[i]}\right\}T^i\right)
\ast
\left(\sum_{j \geq 0} \left\{Z^{[j]}\right\}T^j\right).
$$
Hence the association $X \mapsto \sum \{X^{[i]}\}T^i$ respects the defining
relations for the Grothendieck group of varieties, and we get a unique group
homomorphism from $\K0(\cVar_k)$ to $\pU(\cVar_k)$ satisfying $\{X\} \mapsto
\sum \left\{X^{[i]}\right\}T^i$.

\begin{definition}
\label{def-exponential}
The group homomorphism described above is called the  \term{exponential
function} and is denoted by
$
\exp\colon \K0(\cVar_k) \to \pU(\cVar_k).
$
\end{definition}

\noindent
The exponential function gives us a natural way to extend the power operation
to the Grothendieck ring $\K0(\cVar_k)$.

\begin{definition}
Let $x \in \K0(\cVar_k)$, and let $G$ be a finite group acting on a set $S$
with $n$ elements. The \term{power of $x$ by $S$} is defined as the element
$$
x^{S} := \Res{G}{\Sigma_n}c_n
$$
in $\K0(\cgVar{G}_k)$, where $c_n$ denotes the coefficient for $T^n$ in
$\exp(x)$.
\end{definition}

The results of this section prove the Binomial Theorem stated in the
introduction. Indeed, the formula for addition follows directly from the
definition of multiplication in the ring $\pK(\cVar_k)$. The statement about
vector bundles is the following proposition.

\begin{prop}
\label{prop-bundle}
Let $E \to X$ be a vector bundle of rank $d$ in $\cVar_k$. Then
$E^{[n]} \to X^{[n]}$ is a vector bundle of rank $nd$ in $\cgVar{\Sigma_n}_k$.
In particular, we have $\left\{E^{[n]}\right\} = \LL^{nd}\cdot \left\{X^{[n]}\right\}$
in $\K0(\cgVar{\Sigma_n}_k)$.
\end{prop}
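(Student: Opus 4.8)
The plan is to first establish that $E^{[n]} \to X^{[n]}$ genuinely is a $\Sigma_n$-equivariant vector bundle of rank $nd$, and then to read off the class identity directly from relation \textbf{R2}.

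For the underlying vector bundle structure, I would identify $E^{[n]}$ with an external direct sum. Writing $p_i\colon X^{[n]} \to X$ for the $i$-th projection, the product $E^{[n]} = E\times\cdots\times E$ is canonically the Whitney sum $p_1^\ast E \oplus \cdots \oplus p_n^\ast E$ over $X^{[n]}$: its fibre over $(x_1, \ldots, x_n)$ is $E_{x_1}\oplus\cdots\oplus E_{x_n}$. Each $p_i^\ast E$ is a pullback of a rank $d$ bundle and hence has rank $d$, so the sum is a vector bundle of rank $nd$; that products and pullbacks of vector bundles are again vector bundles holds étale-locally and therefore persists for algebraic spaces.

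The substantive point is the equivariance. I would verify that the factor-permuting action of $\Sigma_n$ on $E^{[n]}$ covers the permutation action on $X^{[n]}$ and acts by bundle automorphisms. Under the identification above a permutation $\sigma$ carries the summand $p_i^\ast E$ to $p_{\sigma^{-1}(i)}^\ast E$, which on each fibre is precisely the linear isomorphism permuting the direct summands of $E_{x_1}\oplus\cdots\oplus E_{x_n}$. Since this is linear on fibres and compatible with the base action, $E^{[n]} \to X^{[n]}$ is a $\Sigma_n$-equivariant vector bundle of rank $nd$ in $\cgVar{\Sigma_n}_k$. I expect this to be the main obstacle: not any computation, but making precise that the factor permutation constitutes a genuine equivariant linearization, so that relation \textbf{R2} applies.

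With the equivariant vector bundle structure in hand the class identity is immediate. Applying \textbf{R2} to $E^{[n]} \to X^{[n]}$ gives $\{E^{[n]}\} = \{\AA^{nd}\times X^{[n]}\}$, where $\AA^{nd}$ carries the trivial $\Sigma_n$-action. Since $\LL = \{\AA^1\}$ with trivial action and the ring multiplication on $\K0(\cgVar{\Sigma_n}_k)$ is induced by the Cartesian product, we have $\LL^{nd} = \{\AA^{nd}\}$ and hence $\{\AA^{nd}\times X^{[n]}\} = \LL^{nd}\cdot\{X^{[n]}\}$. Combining these yields $\{E^{[n]}\} = \LL^{nd}\cdot\{X^{[n]}\}$, as claimed.
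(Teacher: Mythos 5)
Your proposal is correct and follows essentially the same route as the paper: establish that $E^{[n]} \to X^{[n]}$ is a $\Sigma_n$-equivariant vector bundle of rank $nd$, then conclude by the defining relation \textbf{R2} together with the product structure on $\K0(\cgVar{\Sigma_n}_k)$. The only difference is that the paper treats the bundle statement as well known and cites a lemma of Ekedahl (in the more general context of algebraic stacks) rather than proving it, whereas you supply that detail in-line via the identification $E^{[n]} \cong p_1^\ast E \oplus \cdots \oplus p_n^\ast E$ with $\Sigma_n$ permuting the summands linearly over the permutation action on the base---a valid and self-contained filling-in of the cited fact.
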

\begin{proof}
The fact that $E^{[n]} \to X^{[n]}$ is a rank $nd$ vector bundle is well-known
and easy to prove. See for instance \cite[Lemma~2.4]{ekedahl2009fg} for a
proof in the more general context of algebraic stacks. The statement about
the class in the equivariant Grothendieck ring is simply the defining
relation~R2.
\end{proof}
\section*{Classes of quasi-split tori}
The methods introduced in the previous section give us 
a convenient way to compute the class of a quasi-split torus in the
Grothendieck ring of varieties. As explained in the introduction, it is enough
to consider the universal case.
\begin{prop}
\label{prop-class-torus-raw}
The class of the universal quasi-split torus $\GGm^{[n]}$ of rank $n$
is given by the expression
$$
\sum_{i = 0}^n
\Ind{\Sigma_i\times\Sigma_{n-i}}{\Sigma_n}\left((-1)^{[i]}\boxtimes
1\right) 
\cdot\LL^{n-i}
$$
in the equivariant Grothendieck ring $\K0(\cgVar{\Sigma_n}_k)$. 
\end{prop}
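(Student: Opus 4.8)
The plan is to express the class $\{\GGm^{[n]}\}$ directly in terms of the power operation on $\K0(\cVar_k)$ and then invoke the Binomial Theorem. Since $\GGm = \AA^1 \setminus \{0\}$, relation R1 gives $\{\GGm\} = \LL - 1$ in $\K0(\cVar_k)$. By construction of the exponential function, the power $\{\GGm\}^{[n]}$ is exactly the coefficient of $T^n$ in $\exp(\{\GGm\})$, which is $\{\GGm^{[n]}\}$; so it suffices to compute $(\LL - 1)^{[n]}$. Writing $\LL - 1 = \LL + (-1)$ and applying the additive part of the Binomial Theorem with $x = -1$ and $y = \LL$, I obtain
$$
\{\GGm\}^{[n]} = \sum_{i+j = n} \Ind{\Sigma_i\times\Sigma_j}{\Sigma_n}\left((-1)^{[i]}\boxtimes \LL^{[j]}\right).
$$

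The next step is to evaluate the power $\LL^{[j]}$ of the Lefschetz class. Here I would use the vector bundle part of the Binomial Theorem: the affine line $\AA^1 \to \Spec k$ is a rank $1$ vector bundle over a point, so $\LL^{[j]} = \{\AA^1\}^{[j]} = \LL^{j}\{\Spec k\}^{[j]}$. Since the $j$-fold power of a point is again a point with trivial $\Sigma_j$-action, we have $\{\Spec k\}^{[j]} = 1$, and hence $\LL^{[j]} = \LL^{j}$ in $\K0(\cgVar{\Sigma_j}_k)$, where $\LL^{j}$ denotes the image of the corresponding element of $\ZZ[\LL]$.

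Substituting this back, the claim follows from the $\ZZ[\LL]$-module structure. Concretely, the operation $\boxtimes$ is $\ZZ[\LL]$-bilinear, so $(-1)^{[i]}\boxtimes \LL^{j} = \LL^{j}\left((-1)^{[i]}\boxtimes 1\right)$, and the operation $\Ind{\Sigma_i\times\Sigma_j}{\Sigma_n}$ is $\ZZ[\LL]$-linear, which lets me pull the factor $\LL^{j}$ outside the induction. Setting $j = n - i$ and re-indexing the sum over $i$ then yields exactly the asserted expression.

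Since each ingredient --- the Binomial Theorem, the power of $\LL$, and the $\ZZ[\LL]$-linearity of $\boxtimes$ and $\Ind{\Sigma_i\times\Sigma_j}{\Sigma_n}$ --- is already available, I expect no serious obstacle. The only point requiring care is the computation of $\LL^{[j]}$, where one must invoke the vector bundle relation rather than naively treating $\LL$ as a scalar; matching the exponents so that the final summand carries $\LL^{n-i}$ is the one place where an index slip could creep in.
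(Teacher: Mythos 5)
Your proposal is correct and follows essentially the same route as the paper: write $\{\GGm\} = \LL - 1$, apply the additive part of the Binomial Theorem with $x = -1$ and $y = \LL$, identify $\LL^{[j]} = \LL^{j}$ via the vector bundle relation (Proposition~\ref{prop-bundle}), and finish using the $\ZZ[\LL]$-bilinearity of $\boxtimes$ and the $\ZZ[\LL]$-linearity of $\Ind{\Sigma_i\times\Sigma_{n-i}}{\Sigma_n}$. Your write-up is merely more explicit than the paper's at two points --- checking that $\{\GGm\}^{[n]} = \{\GGm^{[n]}\}$ by the construction of $\exp$, and spelling out $\{\Spec k\}^{[j]} = 1$ --- both of which the paper leaves implicit.
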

\begin{proof}
Since the class of $\GGm$ is $\LL-1$ in $\K0(\cVar_k)$, the class of
$\GGm^{[n]}$ is given by
$$
\sum_{i = 0}^n
\Ind{\Sigma_i\times\Sigma_{n-i}}{\Sigma_n}\left((-1)^{[i]}\boxtimes
\LL^{[n-i]}\right)
$$
according to the Binomial Theorem. From Proposition~\ref{prop-bundle},
it follows that $\LL^{[n-i]}$ equals $\LL^{n-i}$. Hence the result follows
from the $\ZZ[\LL]$-linearity of the operations $\boxtimes$ and
$\Ind{\Sigma_i\times\Sigma_{n-i}}{\Sigma_n}$.
\end{proof}
\noindent
Theorem~\ref{thm-equiv-torus} and its corollaries will now follow provided
that we establish the identity
$$
\label{eqn-coefficients}
(-1)^i\lambda^i([n])
= \Ind{\Sigma_i\times\Sigma_{n-i}}{\Sigma_n}\left((-1)^{[i]}\boxtimes 1\right)
$$
for the coefficients in the expression for the class of $\GGm^{[n]}$ given in
Proposition~\ref{prop-class-torus-raw}. This computation can be done entirely
in the Burnside ring, which allows us to use the methods introduced by Bouc in
\cite{bouc1992}. We briefly recapitulate the parts of the theory that we
need.

The ring $\pK(\cVar_k)$ has an exact analog for Burnside rings, namely the ring
$$
\pBu = \prod_{i \geq 0} \Bu(\Sigma_i).
$$
We think of elements of $\pBu$ as formal power series $\sum a_i T^i$
and define the multiplication using the same formula as for $\pK(\cVar_k)$.
We also have an exponential function $\exp\colon\ZZ \to \pBu$ induced by taking
a finite set $S$ with $n$ elements to the power series $\sum
\left\{S^{[i]}\right\}T^i$. This operation extends to all of $\ZZ$.

The group homomorphisms $\Bu(\Sigma_i) \to \K0(\cgVar{\Sigma_i}_k)$ of the
coefficient rings combine to a group homomorphism
$\iota\colon\pBu \to \pK(\cVar_k)$.
Since the induction and outer products commute with the maps
$\Bu(\Sigma_i) \to \K0(\cgVar{\Sigma_i}_k)$, we see that $\iota$ is in fact
a ring homomorphism. It also respects the exponential functions, which shows
that we can compute the powers $n^{[i]}$ in the Burnside rings for all
integers $n$ and then map the result to $\K0(\cgVar{\Sigma_i}_k)$.

Powers in the Burnside rings can be computed by means of the Lefschetz
invariant. Given a poset $P$ and a natural number $i$, we define
$P^{[i]}$ as the $i$-fold product of $P$ with itself. This set has a
structure of $\Sigma_i$-poset, with the $\Sigma_i$-action permuting the
factors and the ordering given by coordinate-wise comparison. More
specifically, we have $(x_1, \ldots, x_i) \leq (x'_1, \ldots, x'_i)$
provided that $x_j \leq x_j'$ for $1 \leq j \leq i$. Given an integer
$n$ and a poset $P$ such that $n = \Lef_P$, we have $n^{[i]} = \Lef_{P^{[i]}}$
for all natural numbers $i$.

In order to compute the powers of $-1$, we introduce the following notation.
Let $G$ be a finite group and $S$ a finite $G$-set. Given an integer $i$,
we let $\Omega_{\leq i}S$ denote the set of non-empty subsets of $S$
with cardinality less or equal to $i$. The set $\Omega_{\leq i}S$ has
$G$-poset structure with its $G$-action inherited from $S$ in the obvious
way, and the ordering given by inclusion. We use the symbol $S_+$ to denote
the disjoint union of the $G$-set $S$ with the one-point
$G$-set $\{\bullet\}$.

\begin{lemma}
\label{lemma-minus-one}
We have the identity $(-1)^{[n]} = \rLef_{\Omega_{\leq n}([n]_+)}$ in
$\K0(\cgVar{\Sigma_n}_k)$ for each natural number $n$.
\end{lemma}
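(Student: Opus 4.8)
The plan is to reduce everything to a computation in the Burnside ring and then identify the coefficient $(-1)^{[n]}$ with the reduced Lefschetz invariant by showing that both sides obey the same recursion. First I would use the ring homomorphism $\iota\colon\pBu\to\pK(\cVar_k)$, which respects the exponential functions and is induced by the coefficientwise maps $\Bu(\Sigma_n)\to\K0(\cgVar{\Sigma_n}_k)$. Since both $(-1)^{[n]}$ and $\rLef_{\Omega_{\leq n}([n]_+)}$ are images of classes living in $\Bu(\Sigma_n)$, it suffices to prove the identity in $\Bu(\Sigma_n)$, where the Lefschetz machinery of \cite{bouc1992} and Lemma~\ref{lemma-induce-lefschetz} are available.

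Next I would record the recursion that characterises $(-1)^{[n]}$. The series $\exp(1)$ has $T^i$-coefficient $e_i$ equal to the class of the one-point $\Sigma_i$-set, and $\exp(-1)=\exp(1)^{-1}$ in $\pBu$. Hence, exactly as in the proof that the units form a group, the coefficients $(-1)^{[j]}$ are the unique solution of $(-1)^{[0]}=1$ together with $\sum_{i+j=n}\Ind{\Sigma_i\times\Sigma_j}{\Sigma_n}(e_i\boxtimes(-1)^{[j]})=0$ for $n\geq1$. My goal then becomes to prove that the elements $\ell_j:=\rLef_{\Omega_{\leq j}([j]_+)}$ satisfy this same recursion, so that $\ell_j=(-1)^{[j]}$ by uniqueness of the recursive solution.

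The heart of the argument is an equivariant stratification of the $\Sigma_n$-poset $\Omega_{\leq n}([n]_+)$ of proper non-empty subsets of $[n]\sqcup\{\bullet\}$. I would stratify by the trace $A\cap[n]$ on the permuted part: for each $i$ the locus where this trace is a prescribed $i$-element subset assembles, after inducing over the choice of that subset, into a piece of the form $\Ind{\Sigma_i\times\Sigma_j}{\Sigma_n}$ with $i+j=n$, applied to a product of two sub-posets — one recording the chosen $i$-subset of $[n]$, the other recording the behaviour on the remaining coordinates together with the point $\bullet$. Using Lemma~\ref{lemma-induce-lefschetz} I can pull the induction outside the Lefschetz invariant, turning the stratum-by-stratum computation of $\Lef_{\Omega_{\leq n}([n]_+)}$ into the convolution $\sum_{i+j=n}\Ind{\Sigma_i\times\Sigma_j}{\Sigma_n}(e_i\boxtimes\ell_j)$. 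The factor $e_i$ arises because the relevant sub-poset of subsets on the chosen coordinates that contain $\bullet$ has $\{\bullet\}$ as a cone point, hence is homotopy equivalent to a point and contributes the trivial class, while the complementary factor reproduces $\Omega_{\leq j}([j]_+)$ and thus $\ell_j$. Matching the reduced normalisation $\rLef=\Lef-1$ against the constant term of the recursion then yields the desired vanishing relation.

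The step I expect to be the main obstacle is the equivariant homotopy bookkeeping in this stratification: I must verify that each stratum is genuinely induced from $\Sigma_i\times\Sigma_j$ with the two tensor factors as described, and that the cone-point collapse provided by $\bullet$ can be carried out $(\Sigma_i\times\Sigma_j)$-equivariantly, so that homotopy-invariance of the Lefschetz invariant legitimately applies. Intertwined with this is the careful handling of the degenerate, extreme strata — precisely those responsible for $\Delta(\Omega_{\leq n}([n]_+))$ being a sphere rather than contractible — and of the overall sign introduced by the reduced normalisation, which is where the comparison of $\Lef$ with $\rLef$ at the bottom of the recursion must be done with care.
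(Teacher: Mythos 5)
Your overall frame --- reduce to $\Bu(\Sigma_n)$, use $\exp(-1)=\exp(1)^{-1}$ together with $1^{[i]}=1$, and characterise $(-1)^{[n]}$ as the unique solution of the recursion $\sum_{i+j=n}\Ind{\Sigma_i\times\Sigma_j}{\Sigma_n}\bigl(1\boxtimes(-1)^{[j]}\bigr)=0$ --- is sound, and it is genuinely different from the paper, whose proof instead exhibits an explicit order-reversing homotopy equivalence between $\Omega_{\leq n}([n]_+)$ and Bouc's join poset $2^{\ast n}$ and quotes his computation of $(-1)^{[n]}$ as $-\rLef_{2^{\ast n}}$. But the step carrying all the content in your plan, the ``equivariant stratification by the trace $A\cap[n]$'', fails as described. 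The Lefschetz invariant is an alternating sum of the chain sets $\Sd_i$, and the trace is not constant along a chain in $\Omega_{\leq n}([n]_+)$, so the $\Sd_i$ do not decompose according to trace and $\Lef$ is not additive over your strata. In fact no such decomposition can exist even at the level of $0$-chains: $\Omega_{\leq n}([n]_+)$ has $2^{n+1}-2$ elements, while your proposed pieces $\Ind{\Sigma_{n-j}\times\Sigma_j}{\Sigma_n}\bigl(\mathrm{pt}\boxtimes\Omega_{\leq j}([j]_+)\bigr)$ together have $\sum_{j=1}^{n}\binom{n}{j}(2^{j+1}-2)=2\cdot 3^{n}-2^{n+1}$ elements, strictly more for $n\geq 2$. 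The convolution identity you need holds only after substantial cancellation, and the known route to it is precisely the machinery of the paper's last section: Lemmas~\ref{lemma-induce-lefschetz} and~\ref{lemma-induce-powerset} give $\Ind{\Sigma_j\times\Sigma_{n-j}}{\Sigma_n}\bigl(\rLef_{\Omega_{\leq j}([j]_+)}\boxtimes 1\bigr)=\Lef_{\Omega_{\leq j,j}([n])}-\Lef_{\Omega_{\leq j-1,j}([n])}$, the effectivity argument in the proof of Theorem~\ref{thm-equiv-torus} identifies this difference with $\Lef_{\Omega_{\leq j}([n])}-\Lef_{\Omega_{\leq j-1}([n])}$, and the sum over $j$ then telescopes to $\Lef_{\Omega_{\leq n}([n])}=1$ (that poset has a maximal element). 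So your strategy is completable, but only by rebuilding the flag-poset comparison; the cone-point collapse you invoke merely covers $\Omega_{\leq n}([n]_+)$ by two contractible subposets (the subsets containing $\bullet$, and their complement $\Omega_{\leq n}([n])$), which by itself determines nothing --- the order complex here is a sphere, not a point.

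There is also a sign error you cannot absorb into bookkeeping: with your normalisation $\ell_j=\rLef_{\Omega_{\leq j}([j]_+)}$ the recursion fails already at the base case. For $n=1$ the poset $\Omega_{\leq 1}([1]_+)$ is a discrete two-point poset, so $\ell_1=1$, whereas $(-1)^{[1]}=-1$; for $n=2$ one computes $(-1)^{[2]}=\{[2]\}-1=-\ell_2$, where $\{[2]\}$ is the class of the free $\Sigma_2$-orbit. The correct identity is $(-1)^{[n]}=-\rLef_{\Omega_{\leq n}([n]_+)}$; the statement as printed drops this sign, as one sees from the paper's own proof (which produces $-\rLef_{2^{\ast n}}$) and from the proof of Theorem~\ref{thm-equiv-torus}, which uses the lemma in the form $(-1)^{[i]}=-\rLef_{\Omega_{\leq i}([i]_+)}$. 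Your uniqueness argument would have detected this immediately at $n=1$ had you run it, so make that base-case check the first step before attempting any stratification.
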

\begin{proof}
By \cite[Lemme 4]{bouc1992}, we have the identity $(-1)^{[n]} =
-\rli_{2^{\ast n}}$. The $\Sigma_n$-poset $2^{\ast n}$ can be
described as the set $F$ of functions from $[n]$ to $\{a, b, \infty\}$
which are not constant $\infty$. The $\Sigma_n$-action is given by
$(\sigma\cdot\varphi)(x) = \varphi(\sigma^{-1}x)$ for $\sigma\in\Sigma_n$.
The set $\{a, b, \infty\}$ has an ordering given by $a < \infty$, $b < \infty$,
and the ordering on the set $F$ is given by $f \leq g$ if $f(x) \leq g(x)$
for all $x \in [n]$. This gives $F$ a $\Sigma_n$-poset structure. Consider
the map
$
f\colon F \to \Omega_{\leq n}([n]_+)
$
defined by
$$
\varphi \mapsto
\left\{
\begin{array}{ll}
\varphi^{-1}(a) & \text{if } b\not\in \Im\varphi, \\
\varphi^{-1}(a)\cup \{\bullet\} & \text{otherwise.}\\
\end{array}
\right.
$$
We also have a map $g$ in the other direction, which takes a subset $U$
to the function $\varphi_U$ defined by
$$
\varphi_U(x)=
\left\{
\begin{array}{ll}
a & \text{if } x \in U, \\
b & \text{if } x \not\in U \text{ and } \bullet \in U,\\
\infty & \text{otherwise.}\\
\end{array}
\right.
$$
Both the maps $f$ and $g$ are $\Sigma_n$-equivariant and order-reversing. 
One easily verifies that $h\circ g$ is the identity and that $g\circ h$
is comparable with the identity. Hence $\Omega_{\leq n}([n]_+)$ is homotopy
equivalent to $F$ considered with its opposite ordering. Since reversing
the ordering of a $G$-poset clearly does not change its Lefschetz invariant,
the statement in the lemma follows.
\end{proof}

Next, we need to investigate how the $G$-posets $\Omega_{\leq i}(S)$
transform under induction. Fix natural numbers $i' \leq i$. We let $\Omega_{\leq i', i}(S)$
denote the set of flags $U \subset V$ of non-empty subsets of $S$, where $U$
has at most $i'$ elements and $V$ has exactly $i$ elements.
These flags are ordered by level-wise inclusion, and we have a natural action
of $G$ on $\Omega_{\leq i', i}(S)$ respecting the ordering.

\begin{lemma}
\label{lemma-induce-powerset}
Let $i' \leq i \leq n$ be natural numbers and define $j = n - i$. We consider
$\Omega_{\leq i'}([i])$ a $\Sigma_i\times\Sigma_{j}$-poset by letting the
second factor act trivially. Then
$$
\Ind{\Sigma_i\times\Sigma_{j}}{\Sigma_n}\Omega_{\leq i'}([i]) \simeq
\Omega_{\leq i', i}([n])
$$
as $\Sigma_n$-posets.
\end{lemma}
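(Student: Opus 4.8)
The plan is to exhibit the evident map and verify directly that it is an isomorphism of $\Sigma_n$-posets. Realise $\Sigma_i\times\Sigma_j$ as the stabiliser in $\Sigma_n$ of the decomposition $[n]=\{1,\dots,i\}\sqcup\{i+1,\dots,n\}$, so that $[i]=\{1,\dots,i\}$ and the second factor $\Sigma_j$ permutes the complementary block and hence acts trivially on $\Omega_{\leq i'}([i])$. Following the description of induced posets from the proof of Lemma~\ref{lemma-induce-lefschetz}, the poset $\Ind{\Sigma_i\times\Sigma_j}{\Sigma_n}\Omega_{\leq i'}([i])$ consists of equivalence classes $[(\sigma,U)]$ with $\sigma\in\Sigma_n$ and $U\subseteq[i]$ a non-empty subset of cardinality at most $i'$, subject to $(\sigma,U)\sim(\sigma\tau,\tau^{-1}U)$ for $\tau\in\Sigma_i\times\Sigma_j$, with comparabilities $[(\sigma,U)]\leq[(\sigma,U')]$ exactly when $U\subseteq U'$. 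I would define the map $\Phi$ by sending $[(\sigma,U)]$ to the flag $\sigma U\subseteq\sigma[i]$ in $\Omega_{\leq i',i}([n])$; here $\sigma[i]$ has exactly $i$ elements and $\sigma U$ is non-empty of cardinality at most $i'$, so this is a legitimate flag.

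The verifications then proceed in order. Well-definedness: since any $\tau\in\Sigma_i\times\Sigma_j$ fixes the block $[i]$ setwise, one has $(\sigma\tau)[i]=\sigma[i]$ and $(\sigma\tau)(\tau^{-1}U)=\sigma U$, so $\Phi$ is constant on equivalence classes. Equivariance: the $\Sigma_n$-action sends $[(\sigma,U)]$ to $[(\rho\sigma,U)]$, whose image is $\rho\sigma U\subseteq\rho\sigma[i]$, that is, $\rho$ applied to the flag $\sigma U\subseteq\sigma[i]$. Order-preservation: comparable classes can be represented with a common first coordinate $\sigma$, and $U\subseteq U'$ yields the levelwise inclusion $(\sigma U\subseteq\sigma[i])\leq(\sigma U'\subseteq\sigma[i])$.

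For bijectivity I would write down the inverse explicitly: given a flag $U'\subseteq V'$ with $|V'|=i$, choose any $\sigma\in\Sigma_n$ with $\sigma[i]=V'$ and send the flag to $[(\sigma,\sigma^{-1}U')]$. Two admissible choices of $\sigma$ differ by right multiplication by an element of $\Sigma_i\times\Sigma_j$, since such an element must fix $[i]$ setwise; by the defining equivalence relation the class is therefore independent of the choice, and a short computation shows this is a two-sided inverse to $\Phi$.

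The step I expect to require the most care is verifying that $\Phi$ reflects the ordering, for this is what upgrades an equivariant order-preserving bijection to a genuine poset isomorphism. The key observation is a rigidity feature of the target: two flags $U'_1\subseteq V'_1$ and $U'_2\subseteq V'_2$ can only be comparable when $V'_1\subseteq V'_2$, and since both top levels have exactly $i$ elements this forces $V'_1=V'_2$. Thus comparable flags automatically share a top level, which is precisely what allows their preimages to be represented with a common first coordinate $\sigma$; comparability in $\Omega_{\leq i',i}([n])$ then reduces to the inclusion $U'_1\subseteq U'_2$, matching comparability in the induced poset exactly. Once this alignment between the two notions of comparability is established, the lemma follows.
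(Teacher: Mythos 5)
Your proof is correct, but it takes a genuinely different route from the paper's. The paper never constructs the isomorphism directly: following the pattern of Proposition~\ref{prop-power-prop}, it exhibits the unit map $\eta\colon U \mapsto (U \subset [i])$ and verifies the \emph{universal property} of induction, showing that every $\Sigma_i\times\Sigma_j$-poset morphism $f$ from $\Omega_{\leq i'}([i])$ to the restriction of an arbitrary $\Sigma_n$-poset extends uniquely to $f^\natural(U\subset V)=\tau f(\tau^{-1}U)$ with $\tau^{-1}V=[i]$. You instead work in the explicit equivalence-class model of $\Ind{\Sigma_i\times\Sigma_j}{\Sigma_n}$ (the model the paper records only in the proof of Lemma~\ref{lemma-induce-lefschetz}) and write down the isomorphism $[(\sigma,U)]\mapsto(\sigma U\subseteq\sigma[i])$ together with its inverse. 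The combinatorial core is the same in both arguments: a permutation carrying $[i]$ onto the $i$-element top level of a flag is unique up to right multiplication by $\Sigma_i\times\Sigma_j$ --- your choice of $\sigma$ with $\sigma[i]=V'$ is the mirror image of the paper's $\tau$. What the universal-property route buys is that it never has to analyse the order relation on the induced poset itself, only that $f^\natural$ is order-preserving, which is immediate; the direct route must additionally prove order-reflection, and you correctly isolate and settle this one delicate point via the rigidity observation that comparable flags in $\Omega_{\leq i',i}([n])$ have equal top levels (both have cardinality exactly $i$), so comparable classes admit representatives with a common first coordinate. What your route buys is explicitness and self-containedness: no appeal to adjunction formalism or to uniqueness of objects satisfying a universal property. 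One small caveat worth recording: in the equivalence-class model the order is \emph{generated} by the relations $[(\sigma,U)]\leq[(\sigma,U')]$ for $U\subseteq U'$, so your ``exactly when'' description of comparability implicitly uses the (easy) fact that this generated relation is already transitive because the $\Sigma_i\times\Sigma_j$-action preserves the order; alternatively, your order-preserving and order-reflecting bijection certifies this after the fact.
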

\begin{proof}
We use a similar argument as in the proof of Proposition~\ref{prop-power-prop}.
Consider the $\Sigma_i\times\Sigma_{j}$-poset morphism
$$
\eta\colon\Omega_{\leq i'}([i]) \to
\Res{\Sigma_i\times\Sigma_{j}}{\Sigma_n} \Omega_{\leq i', i}([n])
$$
that takes a subset $U$ of $[i]$ to the flag $U \subset [i]$. We prove that any
$\Sigma_i\times\Sigma_{j}$-poset morphism
$f\colon \Omega_{\leq i'}([i]) \to \Res{\Sigma_i\times\Sigma_{j}}{\Sigma_n} P$,
where $P$ is an arbitrary $\Sigma_n$-poset, uniquely extends to a
$\Sigma_n$-poset morphism $f^\natural\colon\Omega_{\leq i',i}([n]) \to P$ via
$\eta$. As a candidate, we take the function
$$
f^\natural(U \subset V) = \tau f(\tau^{-1} U)
$$
where $\tau$ is a permutation such that $\tau^{-1}V = [i]$. The
permutation $\tau$ is unique up to right multiplication by a permutation
in $\Sigma_i\times\Sigma_{j}$. Hence $f^\natural$ is well-defined by
$\Sigma_i\times\Sigma_{j}$-equivariance of $f$. For an arbitrary element
$\sigma \in \Sigma_n$, we have the equalities
$$
\sigma f^\natural(U \subset V)
= \sigma\tau f(\tau^{-1} U)
= \sigma\tau f((\tau^{-1} \sigma^{-1}) \sigma U)
= f^\natural(\sigma(U\subset V)).
$$
Here the last step follows from the identity
$(\tau^{-1} \sigma^{-1})\sigma V = [i]$.
Since $f^\natural$ clearly is order-preserving, this shows that $f^\natural$
is a $\Sigma_n$-poset morphism. Finally, since the image
of $\eta$ generates $\Omega_{\leq i', i}([n])$ as a $\Sigma_n$-set,
the extension $f^\natural$ is a unique with respect to the desired
property, which concludes the proof.
\end{proof}

Finally, we need an expression for the lambda operations in terms of Lefschetz
invariants of equivariant posets. Such expressions have been computed by Bouc
and Rökaeus in \cite{br2009}. We state the result together with an auxiliary
result from the same article.

\begin{lemma}[Bouc--Rökaeus]
\label{lemma-br}
Let $G$ be a finite group and $S$ a finite $G$-set. Then we have the
following identities in the Burnside ring $\Bu(G)$:
\begin{enumerate}
\item[(a)] $\lambda^n(S) = (-1)^{n-1}\rLef_{\Omega_{\leq n}(S_+)}$
\item[(b)] $\rLef_{\Omega_{\leq n}(S_+)} = \rLef_{\Omega_{\leq n}(S)} -
\rLef_{\Omega_{\leq n-1}(S)}$
\end{enumerate}
\end{lemma}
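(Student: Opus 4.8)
The plan is to prove both identities by evaluating \term{marks}. Recall that the mark homomorphism $\Bu(G)\hookrightarrow\prod_{[H]}\ZZ$, sending a class to the tuple of fixed-point cardinalities $(|X^H|)_{[H]}$, is injective, so it suffices to verify (a) and (b) after taking $H$-fixed points for every subgroup $H\subseteq G$. The crucial observation is that for any $G$-poset $P$ one has $(\Delta P)^H=\Delta(P^H)$, where $\Delta$ denotes the order complex: an order-preserving permutation of a finite chain is the identity, so a chain is $H$-fixed precisely when each of its entries is. Consequently the mark of $\rLef_P$ at $H$ is the reduced Euler characteristic $\tilde{\chi}(\Delta(P^H))$. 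I would then identify $\Delta(\Omega_{\leq n}(T))$ with the barycentric subdivision of the $(n-1)$-skeleton of the simplex $\Delta(T)$ on the vertex set $T$; this identification is $G$-equivariant, hence compatible with passing to $H$-fixed points.

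For (b) the key input is that $\Delta(S_+)=\{\bullet\}\ast\Delta(S)$ is a cone with apex the fixed vertex $\bullet$. Writing $K_j$ for the $j$-skeleton of $\Delta(S)$, the $(n-1)$-skeleton of this cone splits $G$-equivariantly as the mapping cone of the inclusion $K_{n-2}\hookrightarrow K_{n-1}$, the cone part being $\{\bullet\}\ast K_{n-2}$. Since $\bullet$ is $G$-fixed, this decomposition survives taking $H$-fixed points for every $H$, and additivity of the reduced Euler characteristic over a mapping cone gives the mark of $\rLef_{\Omega_{\leq n}(S_+)}$ at $H$ as $\tilde{\chi}(K_{n-1}^H)-\tilde{\chi}(K_{n-2}^H)$. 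As $K_{n-1}$ and $K_{n-2}$ model $\Omega_{\leq n}(S)$ and $\Omega_{\leq n-1}(S)$ respectively, this is exactly the mark version of (b), which by injectivity of the marks gives the identity in $\Bu(G)$.

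For (a) I would exploit that the same mapping cone is homotopy equivalent to $K_{n-1}/K_{n-2}$, a wedge of $(n-1)$-spheres indexed by the $n$-element subsets of $S$. Its reduced homology is concentrated in degree $n-1$ and equals the top simplicial chain module of $\Delta(S)$, namely the exterior power $\Lambda^n$ of the permutation module on $S$; equivalently, $G$ permutes the spheres through its action on $n$-subsets and acts on each by the sign of the induced permutation. This is precisely the class $\lambda^n(S)$, and since the homology sits in a single odd/even degree the reduced Lefschetz invariant picks up the global sign $(-1)^{n-1}$, yielding (a). To make this rigorous in $\Bu(G)$ rather than merely in the representation ring, I would again argue through marks: taking $H$-fixed points turns $\Delta(S)$ into the simplex on the set of $H$-orbits, with the $(n-1)$-skeleton becoming the subcomplex of faces whose vertices have total orbit-weight at most $n$. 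An alternating-sum computation over such weighted faces then shows that $\tilde{\chi}(\Delta(\Omega_{\leq n}(S_+))^H)$ equals $(-1)^{n-1}\sum_I(-1)^{n-|I|}$, the sum taken over orbit-subsets $I$ of total weight exactly $n$, which is $(-1)^{n-1}$ times the mark of $\lambda^n(S)$.

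The step I expect to be the main obstacle is this last equivariant refinement. The non-equivariant statement that the skeleton is a wedge of spheres with Euler characteristic $(-1)^{n-1}\binom{|S|}{n}$ is classical; the real content lies in pinning down the $G$-action on the top homology, in particular the orientation signs that distinguish the relevant $\lambda^n$ from the naive permutation module on $n$-subsets. Equivalently, one must verify the orbit-weighted sign formula for the marks of $\lambda^n(S)$, which is exactly the combinatorial computation carried out by Bouc and Rökaeus in \cite{br2009}.
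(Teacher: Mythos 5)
Your proposal cannot be compared against a proof in the paper, because the paper contains none: Lemma~\ref{lemma-br} is quoted as a result of Bouc and Rökaeus, with \cite{br2009} as the only justification. Judged on its own terms, your mark-theoretic strategy is sound, and its foundational steps are all correct: injectivity of the mark homomorphism; the observation that an $H$-fixed chain has $H$-fixed entries, so that the mark of $\rLef_P$ at $H$ is the reduced Euler characteristic of the order complex of $P^H$; the identification of $\Omega_{\leq n}(T)$ with the face poset of the $(n-1)$-skeleton of the simplex on $T$; and, for (b), the decomposition of the skeleton of the cone $\{\bullet\}\ast\Delta(S)$ according to whether a face contains the $G$-fixed apex, which indeed survives passage to $H$-fixed points. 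In fact (b) needs no mapping-cone topology at all: splitting the nonempty $H$-stable faces of $\Omega_{\leq n}(S_+)$ by whether they contain $\bullet$ gives the inclusion--exclusion identity $\sum_{J\subseteq \mathrm{Orb}_H(S_+),\ w(J)\leq n}(-1)^{|J|}=\sum_{I\subseteq \mathrm{Orb}_H(S),\ w(I)=n}(-1)^{|I|}$, which is the mark form of (b) directly.

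The step you flag as the main obstacle --- the orbit-weighted sign formula for the marks of $\lambda^n(S)$ --- is a genuine gap as written, and deferring it to \cite{br2009} makes the argument circular, since that is the very source of the lemma. But it is easier than you suggest, \emph{provided} you first pin down which lambda structure is meant: identity (a) is false for the naive operations sending $S$ to its $G$-set of $n$-subsets (take $G=\ZZ/2$ and $S$ a free orbit of order $2$: then $(-1)^{2-1}\rLef_{\Omega_{\leq 2}(S_+)}=\{S\}-1$, whereas the $2$-subsets operation gives $1$; these have different marks at $G$). The relevant structure is the one determined by symmetric powers via $\sigma_t(x)\lambda_{-t}(x)=1$, where $\sigma^n(S)=S^n/\Sigma_n$. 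With that definition the computation is one line: the mark of $\sigma^n(S)$ at $H$ counts multisets of $H$-orbits of total weight $n$, so $\sum_n m_H(\sigma^n(S))t^n=\prod_{O}(1-t^{|O|})^{-1}$ over $H$-orbits $O$ of $S$, whence $\sum_n(-1)^n m_H(\lambda^n(S))t^n=\prod_O(1-t^{|O|})$, i.e.\ $m_H(\lambda^n(S))=\sum_I(-1)^{n-|I|}$ over orbit-subsets $I$ of total weight exactly $n$ --- precisely your formula. Combined with the $\bullet$-splitting identity above, this closes (a) with no appeal to wedge-of-spheres homology; your exterior-power discussion is fine as motivation but, as you yourself note, proves nothing in $\Bu(G)$, where no sign representation exists. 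With this one-line substitution for the deferred step, your proof is complete and self-contained --- strictly more than the paper provides.
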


Now we are ready to combine the results and prove the main theorem about the
expression for the class of the universal quasi-split torus.

\begin{proof}[Proof of Theorem~\ref{thm-equiv-torus}]
By Lemma~\ref{lemma-br}~(b), we have the identities
$$
\rLef_{\Omega_{\leq i}([i]_+)}
= \rLef_{\Omega_{\leq i}([i])} - \rLef_{\Omega_{\leq i - 1}([i])}
= \Lef_{\Omega_{\leq i}([i])} - \Lef_{\Omega_{\leq i - 1}([i])}.
$$
Using that taking the Lefschetz invariant commutes with induction as proved
in Lemma~\ref{lemma-induce-lefschetz}, together with the explicit description
from Lemma~\ref{lemma-induce-powerset} of the induced posets involved, we get
$$
\Ind{\Sigma_i\times\Sigma_{n-i}}{\Sigma_n}\rli_{\Omega_{\leq i}([i]_+)}
= \li_{\Omega_{\leq i, i}([n])} - \li_{\Omega_{\leq i - 1, i}([n])}.
$$
Next, we note that the element $\Sd_j \Omega_{\leq i, i}([n]) - \Sd_j
\Omega_{\leq i - 1, i}([n])$ in $\Bu(\Sigma_n)$ is effective. It is
represented by the $\Sigma_n$-set of length $j$ chains of flags $V \subset U$
in $[n]$, with $V$ non-empty and $U$ having cardinality $i$, whose
maximal element satisfies $V = U$. But it is obvious that the set $U$ is
redundant in this description. The set is equivariantly isomorphic to the set of
chains of non-empty subsets of $[n]$ with the maximal subset of cardinality $i$,
which has the class $\Sd_j \Omega_{\leq i}([n]) - \Sd_j \Omega_{\leq i -
1}([n])$.
By using Lemma~\ref{lemma-br}~(b) again, we therefore get 
$$
\Ind{\Sigma_i\times\Sigma_{n-i}}{\Sigma_n}\rLef_{\Omega_{\leq i}([i]_+)}
= \Lef_{\Omega_{\leq i}([n])} - \Lef_{\Omega_{\leq i - 1}([n])}
= \rLef_{\Omega_{\leq i}([n]_+)}.
$$
By using the expression $(-1)^{[i]} = -\rLef_{\Omega_{\leq i}([i]_+)}$
derived in Lemma~\ref{lemma-minus-one}, we get
$$
\Ind{\Sigma_i\times\Sigma_{n-i}}{\Sigma_n}\left((-1)^{[i]}\boxtimes 1\right)
= -\rLef_{\Omega_{\leq i}([n]_+)}.
$$
By applying Lemma~\ref{lemma-br}~(a) to the right hand side of this identity,
we see that the coefficients in the formula of
Proposition~\ref{prop-class-torus-raw} coincides with the coefficients in
Theorem~\ref{thm-equiv-torus}.
\end{proof}
\begin{remark}
Note that we have only used a very special case of Bouc's construction
of the exponential function. He considers the more general ring
$\pBu(G) = \prod_{i \geq 0} \Bu(G \wr\Sigma_i)$ where $G$ is an arbitrary
finite group. Here $G \wr\Sigma_i$ denotes the wreath product of $G$ by
$\Sigma_i$. In particular, there are power operations $(-)^{[i]}\colon\Bu(G) \to
\Bu(G \wr\Sigma_i)$ for all $i\in \NN$. A similar construction could be made
for equivariant Grothendieck rings of varieties. In this setting, one could
compute the class of the Weil restriction along a finite étale map for more
general varieties $X$. More precisely, the class $\{X\}$ could be a polynomial
in $\LL$ with coefficients in finite étale classes.
\end{remark}

\bibliographystyle{alpha}
\bibliography{main}
\end{document}